\newtheorem{theorem}{Theorem}[section]
\newtheorem{lemma}[theorem]{Lemma}
\newtheorem{proposition}[theorem]{Proposition}
\newtheorem{corollary}[theorem]{Corollary}
\newtheorem{question}[theorem]{Question}
\newtheorem{observation}[theorem]{Observation}
\newtheorem*{theorem*}{Theorem}
\newtheorem{example}[theorem]{Example}
\newcommand{\ch}[1]{c_{H,#1}}
\newcommand{\cv}[1]{c_{\rm #1}}
\DeclareMathOperator{\diam}{diam}
\DeclareMathOperator{\rad}{rad}
\newcommand{\T}[0]{\mathcal{T}}
\newcommand*{\myproofname}{Proof}
\title{$k$-Hyperopic Cops and Robber}
\begin{document}

\author{Nicholas Crawford \footnote{\href{mailto:nicholas.2.crawford@ucdenver.edu}{nicholas.2.crawford@ucdenver.edu}, Mathematical and Statistical Sciences, Universtiy of Colorado Denver, USA}
\and Vesna Ir\v si\v c Chenoweth \footnote{\href{mailto:vesna.irsic@fmf.uni-lj.si}{vesna.irsic@fmf.uni-lj.si}, Faculty of Mathematics and Physics, University of Ljubljana, Slovenia}}

\maketitle

\begin{abstract}
    A generalization of hyperopic cops and robber, analogous to the $k$-visibility cops and robber, is introduced in this paper. For a positive integer $k$ the $k$-hyperopic game of cops and robber is defined similarly as the usual cops and robber game, but with the robber being omniscient and invisible to the cops that are at distance at most $k$ away from the robber. The cops win the game if, after a finite number of rounds, a cop occupies the same vertex as robber. Otherwise, robber wins. The minimum number of cops needed to win the game on a graph $G$ is the $k$-hyperopic cop number $c_{H,k}(G)$ of $G$.

    In addition to basic properties of the new invariant, cop-win graphs are characterized and a general upper bound in terms of the matching number of the graph is given. The invariant is also studied on trees where the upper bounds mostly depend on the relation between $k$ and the diameter of the tree. It is also proven that the 2-hyperopic cop number of outerplanar graphs is at most 2 and an upper bound in terms of the number of vertices of the graph is presented for $k \geq 3$.
\end{abstract}

\noindent
{\bf Keywords:} Hyperopic Cops and Robber, Cop number, Invisible robber, matching, trees, outerplanar graphs

\noindent
{\bf AMS Subj.\ Class.\ (2020):} 05C57, 91A24, 05C70, 05C05, 05C10

\section{Introduction}
\label{sec:intro}

The game of Cops and Robber is a two-player game on a simple graph $G$. One player controls the cops, while the other player controls the robber. A round consists of a move of cops and a move of the robber. At the start of the game, the cops select their starting positions, and afterward the robber selects the starting position as well. Cops move first and then the players alternate taking moves. The cops and robber occupy vertices of the graph, and during a move, they can either stay on the same vertex or move to an adjacent one. The cops win the game if, after a finite number of moves, they catch the robber; that is, one of the cops occupies the same vertex as the robber. The robber wins if he can evade being captured indefinitely.  

The game of Cops and Robber was introduced independently by Quilliot \cite{Qui78} and Nowakowski and Winkler \cite{NoWi83} almost fifty years ago. Initially, only the game with one cop and one robber was considered, while the game with multiple cops was first studied in the 1980s by Aigner and Fromme \cite{AiFr84}. They considered the minimum number of cops needed to win the game on a graph $G$. This graph invariant is called the \emph{cop number} of $G$ and is denoted by $c(G)$. The game has received an astonishing amount of attention, leading also to several well-studied variations of the game; see for example the books \cite{BoNo11} and \cite{Bo22}.

One of the most investigated problems in the area is establishing good upper bounds for the cop number of connected graphs. It is known that the cop number of outerplanar graphs is at most 2 \cite{clarke-2002-phd}, that the cop number of planar graphs is at most 3, \cite{AiFr84} and that the cop number of toroidal graphs is also at most 3 \cite{Lehner2021}. However, the two main problems in the area are still open. Meyniel's conjecture \cite{Fr87} states that if $G$ is a connected graph of order $n$, then $c(G) = O(\sqrt{n})$, while the Schr{\"o}der's conjecture \cite{Schr01} claims that if $G$ is a graph of genus $g$, then $c(G) \leq g + 3$. Moreover, Mohar conjectured  \cite{Mo17} that if $G$ is a graph of genus $g$, then $c(G) = g^{\frac{1}{2} + o (1)}$.

The perfect information assumption in the game of cops and robber, i.e.\ the fact that both players have complete information about the location of the other player, is not a good model for several real-life applications of the game. It is more reasonable to assume that the robber's location is not always completely known to the cops. For example, in the localization game \cite{loc-game1, loc-game2}, the cops only know their respective distances to the robber, while in 0-visibility Cops and Robber \cite{tosic1986-0-vis, dereniowski+2015-0-vis, xue+2022-0-vis}, the robber is invisible unless one of the cops is on the same vertex. There are several similarities between 0-visibility Cops and Robber and graph searching \cite{breisch-1967-graph-searching, fomin+2008-graph-searching}, but in general the problems result in different games. 

For 0-visibility Cops and Robber, introduced in \cite{tosic1986-0-vis}, the location of the robber is unknown to cops unless one of them catches the robber, but apart from that, the rules are the same as in the usual cops and robber game on graphs. An important detail is that in this game we assume that the robber is \emph{omniscient}, meaning that the robber knows the complete strategy of the cops. As a result, the robber is never captured by chance. The minimum number of cops needed to capture the robber on a graph $G$ is the \emph{0-visibility cop number} $\cv{0}(G)$. For example, if the game is played on $K_3$, because the robber is omniscient, one cop cannot win. However, two cops can win, thus $c_0(K_3) = 2$. The game has been solved for complete graphs, complete bipartite graphs, paths and cycles \cite{tosic1986-0-vis}. In \cite{dereniowski+2015-0-visibility-2}, a construction of all trees with a given zero-visibility cop number is given, and it is known that the path-width of a graph is an upper bound for the 0-visibility cop number \cite{dereniowski+2015-0-vis}.

Afterwards, a variation of the game where the visibility of cops is limited to short distances has been introduced \cite{tang-2004-k-visibility, clarke+2020-k-visibility}. Let $k \geq 0$ be an integer. The \emph{$k$-visibility cops and robber} game is played analogous to the 0-visibility game, except that each cop sees the robber if the robber is at distance at most $k$ away from the cop. If $k=0$, this is equivalent to the 0-visibility game, and if $k \geq \diam(G)$, it is equivalent to the usual cops and robber game. The smallest number of cops needed to guarantee catching the robber on a graph $G$ is the \emph{$k$-visibility cop number} $c_k(G)$. Clearly, $c(G) \leq c_k(G) \leq c_j(G)$ for every $0 \leq j \leq k$ and every graph $G$. Among other results known about the game, we mention that in \cite{clarke+2020-k-visibility} trees with a given $k$-visibility cop number are characterized.

In 2019, Bonato, Clarke, Cox, Finbow, Inerney and Messinger introduced the variation of the game of cops and robber called the hyperopic cops and robber \cite{bonato-2019-hyperopic}, which is, in some sense, an inverse of the 1-visibility cops and robber. Here, cops are hyperopic or farsighted, not seeing the robber if they are close. The variation was motivated by real life application of emulating certain prey-predator systems (see for example \cite{janosov-2017}). In this game the robber is invisible if and only if they are adjacent to all cops. Again, we assume that robber is omniscient. The minimum number of cops required to win on a graph $G$ is the \emph{hyperopic cop number} $c_H(G)$. There are several known results about the game \cite{bonato-2019-hyperopic, clarke-2023-hyperopic}, including an upper bound for (outer)planar graphs, results for diameter 2 graphs and Cartesian products, and that connected graphs with hyperopic cop number 1 are exactly trees. 

In this paper, we introduce a generalization of hyperopic cops and robber, analogous to the $k$-visibility cops and robber. For a positive integer $k$ the \emph{$k$-hyperopic game of cops and robber} is defined similarly as the usual cops and robber game, but with the following modifications. While the robber is playing the perfect information game, the robber is invisible to the cops if for every cop it holds that $1 \leq d(r, c) \leq k$ where $r$ and $c$ are the current positions of the robber and the cop, respectively. The cops win the game if, after a finite number of rounds, a cop occupies the same vertex as robber. Otherwise, robber wins. We assume that robber is omniscient and knows all the strategies of the cops. The minimum number of cops needed to win the game on a graph $G$ is the \emph{$k$-hyperopic cop number} $\ch{k}(G)$ of $G$. Clearly, $\ch{1}(G) = c_H(G)$.
In the rest of the paper we consider only connected graphs. 

The paper is organized as follows. Basic definitions needed are defined at the end of this section. In Section \ref{sec:basic}, we present simple results related to different classes of graphs, establish the relation between $c(G)$, $c_0(G)$ and $c_{H,k}(G)$, and characterize graphs with $k$-hyperopic cop number equal to 1. In Section \ref{sec:general-upper}, we prove general upper bounds for $c_0(G)$ and $c_{H,k}(G)$. Section \ref{sec:trees} provides several results for the $k$-hyperopic cop number of trees and gives an upper bound in terms of $k$ in relation to the diameter of the tree. Finally in Section \ref{sec:outerplanar}, we prove that for an outerplanar graph $G$, $c_{H,2}(G) \leq 2$. We finish with a general upper bound on outerplanar graphs and possible directions for further research.

We begin by defining various subclasses of graphs. A \emph{caterpillar} is a tree such that if all leaves of the tree are removed, the remaining graph is a path. Since it will be used multiple times, we define the tree $\widehat{T}$ to be the subdivision of $K_{1,3}$ as shown in Figure \ref{fig:T-hat}. 

\begin{figure}
    \centering
\begin{tikzpicture} [scale=.75]
\tikzstyle{every node}=[font=\LARGE]
\draw (13,20.75) -- (11,19);
\draw (13,20.75) -- (13,19);
\draw (13,20.75) -- (15,19);
\draw (11,18.5) -- (11,17.5);
\draw (13,18.5) -- (13,17.5);
\draw (15,18.5) -- (15,17.5);
\draw (13,21) circle (0.25cm);
\draw  (11,18.75) circle (0.25cm);
\draw  (13,18.75) circle (0.25cm);
\draw  (15,18.75) circle (0.25cm);
\draw  (11,17.25) circle (0.25cm);
\draw  (13,17.25) circle (0.25cm);
\draw  (15,17.25) circle (0.25cm);
\end{tikzpicture}
    \caption{The tree $\widehat{T}.$}
    \label{fig:T-hat}
\end{figure}

We now move onto more standard definitions that have to do with the structure of our graph. A subset of the edges is a \emph{matching} if each vertex appears in at most one edge of that matching. We say $\alpha'(G)$ is the size of the maximum matching in $G$, called the \emph{matching number}. An \emph{induced subgraph}, $G[S]$, is a subgraph of $G$ formed from a subset of the vertices of the graph and all of the edges, from the original graph, connecting pairs of vertices in $S$. 

\section{Basic properties}
\label{sec:basic}

In this section we give basic upper and lower bounds for the $k$-hyperopic cop number, and exact values for several well-known graph classes.

\begin{observation}
    \label{obs:basic-bounds}
    If $G$ is a graph and $\ell \geq k \geq 1$, then
    $$c(G) \leq \ch{k}(G) \leq \ch{\ell}(G) \leq c_0(G).$$
\end{observation}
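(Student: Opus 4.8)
The plan is to derive all three inequalities from a single \emph{monotonicity principle}. Among the four games involved (the usual game, the $k$- and $\ell$-hyperopic games, and the $0$-visibility game) the movement rules, the capture condition, and the omniscience of the robber are identical; the games differ \emph{only} in the set of configurations in which the robber is invisible to the cops. I would prove that if one game conceals the robber in a superset of the configurations in which another game conceals him, then the first cop number is at least the second, and then simply read off the chain from four nested ``invisibility sets''.

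First I would make these sets explicit. Writing a configuration as a pair $(r,C)$ with $r$ the robber's vertex and $C$ the (multiset of) cop vertices, and restricting to non-capture configurations (so $d(r,c)\ge 1$ for all $c\in C$), the robber is invisible never in the usual game, so $I_{\mathrm{usual}}=\emptyset$; exactly when $1\le d(r,c)\le k$ for every $c\in C$ in the $k$-hyperopic game, giving $I_k=\{(r,C): 1\le d(r,c)\le k \text{ for all } c\in C\}$, and analogously $I_\ell$; and at every non-capture configuration in the $0$-visibility game, so $I_0$ is the full set. The inclusions $\emptyset=I_{\mathrm{usual}}\subseteq I_k\subseteq I_\ell\subseteq I_0$ are then immediate: if every cop lies within distance $k$ of the robber it certainly lies within distance $\ell\ge k$, and any element of $I_\ell$ is in particular a non-capture configuration, hence lies in $I_0$. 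These three inclusions correspond precisely to the three inequalities in the statement.

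The core step is the lemma that $I_B\subseteq I_A$ implies $\ch{B}(G)\le \ch{A}(G)$ (abusing notation for the four games), which I would establish by a simulation argument. Model a cop strategy as a function of the history of cop positions together with the observations received, where an observation reveals the robber's exact vertex when he is visible and is blank otherwise. Fix a winning strategy $\sigma_A$ for $\ch{A}(G)$ cops in game $A$. Since $I_B\subseteq I_A$, whenever game $A$ reveals the robber so does game $B$; hence the cops in game $B$ can run $\sigma_A$ verbatim, deliberately discarding every sighting that game $A$ would have withheld, namely those at configurations in $I_A\setminus I_B$, which the cops can detect because they know their own positions. Call the resulting strategy $\sigma_B$. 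One then checks that against any fixed robber trajectory the play produced by $\sigma_B$ in game $B$ coincides step-by-step with the play produced by $\sigma_A$ in game $A$: the cop positions evolve identically because the downgraded observations fed to $\sigma_A$ are the same in both games. As $\sigma_A$ captures every robber trajectory in $A$, the strategy $\sigma_B$ captures every trajectory in $B$, so $\ch{B}(G)\le\ch{A}(G)$.

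The set inclusions are routine; the only delicate point is the simulation lemma, where I must pin down a precise information/strategy model and confirm that voluntarily discarding information cannot hurt the cops. The subtlety is that the robber is omniscient and knows the cops' strategy, so it is not enough to beat one robber path: I must argue the two plays coincide for \emph{all} trajectories simultaneously. This is exactly why formalizing ``winning'' as ``capturing every robber trajectory'' is the right choice, and it is what makes the verbatim simulation go through cleanly.
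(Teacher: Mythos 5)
Your proposal is correct and takes essentially the same route as the paper: the paper's one-line proof is exactly this monotonicity argument (a winning strategy for the game with more concealment, performed verbatim while ignoring extra sightings, wins the game with less concealment), applied to the nested invisibility conditions. You have simply formalized the invisibility sets and the simulation lemma that the paper leaves implicit.
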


\begin{proof}
    A winning strategy for the $k$-hyperopic game performed in the usual cops and robber game is a winning strategy for the same number of cops, thus $\ch{k} \geq c(G)$. The other inequalities follow by an analogous argument.
\end{proof}

\begin{observation}
    \label{obs:0-vis}
    If $\diam(G) \leq k$, then $\ch{k}(G) = c_0(G)$.
\end{observation}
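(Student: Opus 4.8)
The plan is to show that, under the hypothesis $\diam(G)\le k$, the $k$-hyperopic game and the $0$-visibility game are the very same game, so their cop numbers must coincide. The only difference between the two games lies in the visibility rule, so the entire argument reduces to checking that this rule produces identical information for the cops in both settings.

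First I would recall the two visibility conditions. In the $k$-hyperopic game the robber is invisible exactly when every cop $c$ satisfies $1 \le d(r,c) \le k$, while in the $0$-visibility game the robber is invisible exactly when no cop occupies the robber's vertex. The key observation is that when $\diam(G)\le k$, every pair of vertices is at distance at most $k$, so the upper bound $d(r,c)\le k$ holds automatically for every position of the robber and every cop. Consequently the $k$-hyperopic invisibility condition collapses to the single requirement $d(r,c)\ge 1$ for all cops, i.e.\ that no cop shares the robber's vertex.

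This is precisely the $0$-visibility rule: before capture the robber is invisible, and the robber becomes ``visible'' only at the instant some cop lands on its vertex, which is exactly the moment of capture. Since the move rules, the capture condition, and the omniscience of the robber are identical in the two games, they coincide as games whenever $\diam(G)\le k$. Hence any strategy for a fixed number of cops wins in one game if and only if it wins in the other, and therefore $\ch{k}(G)=c_0(G)$. Equivalently, Observation \ref{obs:basic-bounds} already supplies $\ch{k}(G)\le c_0(G)$, and the equivalence of the games gives the reverse inequality $c_0(G)\le \ch{k}(G)$.

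Because the argument is purely a matter of matching up the two rule sets, there is no genuine computational obstacle; the only point requiring care is the verification that the two games share the same information structure at \emph{every} round, so that strategies transfer verbatim in both directions rather than only one. I would make this explicit by noting that, under the diameter hypothesis, the robber's visibility status — and hence every decision available to the cops at each round — is determined identically in both games.
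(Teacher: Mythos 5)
Your proof is correct and takes essentially the same approach as the paper: both argue that when $\diam(G) \leq k$ the distance condition $d(r,c) \leq k$ holds automatically, so the robber is invisible throughout (until capture) and the $k$-hyperopic game coincides with the $0$-visibility game. Your write-up merely spells out this rule-matching in more detail than the paper's brief proof.
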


\begin{proof}
    Since $\diam(G) \leq k$, every vertex of $G$ is at distance at most $k$ from all other vertices. Thus robber is invisible during the whole game (unless he is caugh), so the $k$-hyperopic game is equivalent to the 0-visibility game.
\end{proof}

It is known that $c_0(K_{m,n}) = m$ if $n \geq m \geq 2$ \cite{tosic1986-0-vis}, while $\ch{1}(K_{m,n}) = 2$ if $n \geq m \geq 2$ \cite{bonato-2019-hyperopic}. Using Observation \ref{obs:0-vis} we obtain the following.

\begin{corollary}
    \label{cor:complete-bipartite}
    If $k \geq 2$ and $n \geq m \geq 1$, then $\ch{k}(K_{m,n}) = m$.
\end{corollary}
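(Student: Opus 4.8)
The plan is to reduce the $k$-hyperopic game on $K_{m,n}$ to the $0$-visibility game and then evaluate the latter. First I would check that for all $m,n \ge 1$ we have $\diam(K_{m,n}) \le 2 \le k$: vertices in different parts are adjacent, and two vertices in the same part are joined through any vertex of the other part. Hence Observation \ref{obs:0-vis} applies and yields $\ch{k}(K_{m,n}) = c_0(K_{m,n})$, so it remains only to compute the $0$-visibility cop number.

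For $n \ge m \ge 2$ this is immediate from the known value $c_0(K_{m,n}) = m$ \cite{tosic1986-0-vis}, which gives $\ch{k}(K_{m,n}) = m$ at once. The one case not covered by this citation is $m = 1$, i.e.\ the star $K_{1,n}$, where the claim specializes to $c_0(K_{1,n}) = 1$. Since at least one cop is always required, the real content is the upper bound: a single cop suffices to clear $K_{1,n}$.

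For the $m=1$ case I would exhibit an explicit one-cop strategy. Place the cop on the center $v$ and have it repeatedly dart to an unvisited leaf and return, following the walk $v, \ell_1, v, \ell_2, v, \dots, \ell_n$. The decisive structural fact is that every leaf is adjacent only to $v$, so the robber can pass from one leaf to another only through $v$. Since the cop occupies $v$ at every even step, a robber that flees its leaf to $v$ is caught upon the cop's return, while a robber that stays is caught when the cop visits its leaf; in particular the robber can never relocate to an already-visited leaf. Tracking the shadow set of possible robber positions, each return of the cop to $v$ deletes the just-visited leaf permanently, so after all $n$ leaves have been visited the shadow is empty and the robber has been captured.

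The main (indeed only) obstacle is precisely this $m=1$ case, since the cited formula assumes $m \ge 2$. The verification that the robber cannot survive by shuttling through the center must be carried out carefully, because a superficial shadow computation—tracking only the set of possible positions after each single move—can misleadingly suggest that the shadow never drops below size $n-1$; the point is that the cop's immediate return to $v$ intercepts any escaping robber before it can re-contaminate a cleared leaf. Once $c_0(K_{1,n}) = 1$ is secured, combining it with Observation \ref{obs:0-vis} and the $m \ge 2$ case completes the argument for all $n \ge m \ge 1$.
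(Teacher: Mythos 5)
Your proof takes essentially the same route as the paper: since $\diam(K_{m,n}) \le 2 \le k$, Observation~\ref{obs:0-vis} reduces the claim to the $0$-visibility cop number, which is then read off from the known value $c_0(K_{m,n}) = m$ for $n \ge m \ge 2$ \cite{tosic1986-0-vis}. Your explicit one-cop darting strategy on the star is a small but genuine improvement in completeness: the corollary is stated for $m \ge 1$ while the cited formula only covers $m \ge 2$, and the paper leaves the $m=1$ case implicit (it follows from $c_0 = 1$ for caterpillars, cited elsewhere in the paper); your argument that the cop's immediate return to the center intercepts any robber trying to pass through it is correct and closes that case directly.
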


\begin{proposition}
\label{prop:graph-classes}
    Let $k \geq 1$. 
    \begin{enumerate}
        \item If $n \geq 1$, then $\ch{k}(P_n) = 1$.
        \item If $n \geq 3$, then $\ch{k}(C_n) = 2$.
        \item If $n \geq 1$, then $\ch{k}(K_n) = \left \lceil \frac{n}{2} \right \rceil$.
    \end{enumerate}
\end{proposition}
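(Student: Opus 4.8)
The plan is to treat all three families uniformly by sandwiching $\ch{k}$ between the ordinary cop number and the $0$-visibility cop number via Observation \ref{obs:basic-bounds}, and by collapsing the game to the $0$-visibility game on diameter-$\le k$ graphs via Observation \ref{obs:0-vis}; only the complete-graph case needs a genuinely new argument. For part (1) the lower bound is the trivial $\ch{k}(P_n)\geq c(P_n)=1$. For the upper bound I would give an explicit one-cop strategy that ignores visibility entirely: label the path $v_1 v_2\cdots v_n$, start the cop at $v_1$, and walk it toward $v_n$ one step per round. The invariant to prove by induction is that after the cop steps onto $v_i$ the robber sits on some $v_j$ with $j\geq i+1$ (or is already caught), since on a path the robber cannot cross the cop's vertex without being captured. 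When the cop reaches $v_{n-1}$ the robber is pinned at $v_n$ and is caught on the next move. Alternatively, this is immediate from Observation \ref{obs:basic-bounds} and the known value $c_0(P_n)=1$.

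For part (2), the upper bound $\ch{k}(C_n)\leq 2$ follows from Observation \ref{obs:basic-bounds} together with the known value $c_0(C_n)=2$ (or from a direct two-cop sweep). For the lower bound I would split on $n$. When $n\geq 4$ we have $c(C_n)=2$, so Observation \ref{obs:basic-bounds} already yields $\ch{k}(C_n)\geq 2$. The delicate case is $n=3$, where $c(C_3)=1$ and the bound from below is not enough; here I would invoke Observation \ref{obs:0-vis}, since $\diam(C_3)=1\leq k$ forces $\ch{k}(C_3)=c_0(C_3)=c_0(K_3)=2$. Equivalently, one argues directly that on $C_3$ a single cop never sees the omniscient robber, because every non-capturing position keeps the robber at distance exactly $1\leq k$ from the cop, so the robber just rotates away forever.

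For part (3), Observation \ref{obs:0-vis} reduces the computation to $c_0(K_n)$, since $\diam(K_n)=1\leq k$; equivalently, on $K_n$ every non-capturing configuration keeps all cops at distance exactly $1$ from the robber, so the robber is always invisible. I would then track $S$, the set of vertices consistent with the omniscient robber not yet being caught; after placement $|S|=n-c$, where $c$ is the number of cops. For the upper bound take $c=\lceil n/2\rceil$, so $|S|=\lfloor n/2\rfloor\leq c$, and the cops win in one move by simultaneously occupying every vertex of $S$: wherever the robber sits it is covered and caught, the capture occurring on the cops' move before the robber can react. For the lower bound, with $c\leq\lceil n/2\rceil-1$ one checks that $n-c\geq c+1$, and I would maintain the invariant $|S|\geq c+1$: after the cops move to a set $C$, $|C|\leq c<|S|$ leaves a safe vertex in $S\setminus C$, and completeness of $K_n$ lets the robber relocate to any vertex off $C$, so $S$ updates to $V\setminus C$ of size $n-|C|\geq n-c\geq c+1$, restoring the invariant. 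Hence the robber survives forever and $c$ cops do not suffice.

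The main obstacle is the complete-graph lower bound: one must verify, against every cop strategy, that the possibility set never drops below $c+1$, which relies on completeness of $K_n$ allowing the robber to teleport to any vertex off the cops after each round, and on handling the turn order correctly (the cops covering $S$ act before the robber moves). The path and cycle parts, by contrast, are bookkeeping on top of the two observations and the known $0$-visibility values.
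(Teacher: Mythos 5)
Your proposal is correct and follows the same skeleton as the paper's own proof: the explicit end-to-end sweep for $P_n$, the sandwich $c(G) \leq \ch{k}(G) \leq c_0(G)$ of Observation \ref{obs:basic-bounds} for $C_n$, and the reduction to the $0$-visibility game via Observation \ref{obs:0-vis} for $K_n$. There are two genuine differences, both worth noting. First, for the cycle lower bound the paper asserts $c(C_n) = 2$ for all $n \geq 3$ (citing \cite{BoNo11}); this fails at $n = 3$, since $C_3 = K_3$ is cop-win and $c(C_3) = 1$, so the paper's argument as written does not cover the triangle. Your case split---using $c(C_n) = 2$ only for $n \geq 4$, and handling $n = 3$ through $\diam(C_3) = 1 \leq k$ so that $\ch{k}(C_3) = c_0(K_3) = 2$---repairs this oversight, making your proof more careful than the paper's at this point. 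Second, for $K_n$ the paper simply quotes $c_0(K_n) = \left\lceil \frac{n}{2} \right\rceil$ from \cite{tosic1986-0-vis}, whereas you re-derive it from scratch: the cops cover all $\lfloor n/2 \rfloor$ candidate vertices in a single move for the upper bound, and the invariant $|S| \geq c+1$ on the possibility set gives the lower bound, which is sound because in $K_n$ an uncaught robber is always invisible (every cop sits at distance exactly $1 \leq k$), so the cops' moves form a fixed sequence and completeness lets the robber relocate to any vertex off the cops after each round. Your version is self-contained where the paper's is a one-line citation; both are valid, and your handling of the turn order (cops occupy $S$ before the robber reacts; the omniscient robber needs only that the possibility set never empties) is correct.
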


\begin{proof}
We will prove these separately below:
\begin{description}
    \item[Proof of 1:] The winning strategy for the cop is to start in an end-vertex of $P_n$ and then move to the other end-vertex.
    \item[Proof of 2:]  If $n \geq 3$, then $c_0(C_n) = 2$ (see \cite{tosic1986-0-vis}). Thus by Observation \ref{obs:basic-bounds}, $\ch{k}(C_n) \leq 2$. Similarly, $c(C_n) = 2$ \cite[Lemma 1.1]{BoNo11}, thus $\ch{k}(C_n) \geq 2$.
    \item[Proof of 3:] It follows from Observation \ref{obs:0-vis}, the fact that $c_0(K_n) = \lceil \frac{n}{2} \rceil$ (\cite{tosic1986-0-vis}), and since $\diam(K_n) = 1$ that $\ch{k}(K_n) = \lceil \frac{n}{2} \rceil$. \qedhere
\end{description}
\end{proof}

The following is a generalization of~\cite[Theorem 6]{bonato-2019-hyperopic}.

\begin{theorem}
    \label{thm:diameter-bound}
    Let $G$ be a graph and let $k \geq 1$. If $\diam(G) \geq 2k+1$, then $\ch{k}(G) \leq c(G) + 2$.
\end{theorem}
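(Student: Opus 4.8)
The plan is to designate two of the $c(G)+2$ cops as stationary ``sentries'' placed far apart, whose only job is to guarantee that the robber is never invisible, and to let the remaining $c(G)$ cops execute an optimal strategy for the ordinary full-information game of cops and robber. First I would use the hypothesis $\diam(G) \geq 2k+1$ to fix two vertices $u,v$ with $d(u,v) \geq 2k+1$, place one cop on each of $u$ and $v$, and instruct these two cops never to move for the entire game.

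The key step is a triangle-inequality argument establishing the invariant that the robber is visible at every round until captured. Suppose at some round the robber occupies a vertex $r \notin \{u,v\}$ and is invisible. By definition this means every cop, and in particular both sentries, satisfies $1 \leq d(r,\cdot) \leq k$; but then $d(u,v) \leq d(u,r) + d(r,v) \leq 2k < 2k+1 \leq d(u,v)$, a contradiction. Hence at every round the robber is either on $u$ or $v$, in which case a sentry already shares his vertex and he is caught, or he is at distance greater than $k$ from at least one sentry and is therefore visible to all cops. I would emphasize that this invariant is enforced by the sentries alone, so the $c(G)$ pursuing cops may move arbitrarily according to their strategy without any risk of accidentally hiding the robber.

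With the visibility invariant in hand, the remaining step is the reduction to the standard game. Since the robber is visible at every round, the $c(G)$ non-sentry cops always know his exact position, so they can simply follow an optimal winning strategy for ordinary cops and robber, ignoring the sentries entirely; by definition of $c(G)$ this captures even an omniscient robber in finitely many rounds. I would note that the sentries neither help the robber, as occupying $u$ or $v$ is immediate capture, nor obstruct the pursuers, since several cops may share a vertex. Altogether $c(G)+2$ cops win, giving $\ch{k}(G) \leq c(G)+2$.

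Honestly this argument has no deep obstacle; the work is mostly in setting up the invariant correctly. The points that genuinely require care are verifying that the invariant already holds at the robber's initial placement (the robber places after the cops, so he will avoid $u$ and $v$, and is thus visible from round zero) and confirming that ``visible'' in the $k$-hyperopic game really does supply the pursuing cops with the full-information setting that their standard strategy assumes. One should also check the harmless edge case in which the pursuit drives the robber onto a sentry, which only accelerates the capture.
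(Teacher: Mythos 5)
Your proposal is correct and follows essentially the same approach as the paper: two stationary cops placed on a diametral pair $u,v$ with $d(u,v) \geq 2k+1$ force the robber to always be visible (by the triangle inequality), after which $c(G)$ additional cops win by playing an optimal strategy for the ordinary game. The paper's proof is just a more compressed version of the same argument.
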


\begin{proof}
    Let $u, v\in V(G)$ be such that $d(u,v) = \diam(G)$. Since $\diam(G) \geq 2k+1$, it holds that for every $x \in V(G)$, $d(x,u) > k$ or $d(x,v) > k$. Placing one cop on $u$ and one on $v$ thus results in robber always being visible during the game. Thus $c(G)$ remaining cops can win the game.
\end{proof}

The equality case for $k=1$ presented in~\cite{bonato-2019-hyperopic} can be generalized to provide sharpness examples of Theorem~\ref{thm:diameter-bound}. 

\begin{example}
    The graph $G_k$ is obtained in the following way. Let $X_1, X_2, X_3$ be sets of size $m \geq 3$, and let $X_1 \cup X_2 \cup X_3$ be a clique. Add paths $P_k^i = v_1^i, \ldots, v_k^i$ for $i \in [3]$ and make $v_1^i$ adjacent to every vertex of $X_i$.

\begin{figure}[!ht]
\centering
\begin{tikzpicture} [scale=.4]
\draw  (-1.25,7.5) ellipse (1.75cm and 1cm);
\draw  (2.5,12.25) ellipse (1.75cm and 1cm);
\draw  (6.25,7.5) ellipse (1.75cm and 1cm);
\node [font=\LARGE] at (-1.25,7.5) {$X_1$};
\node [font=\LARGE] at (6.25,7.5) {$X_2$};
\node [font=\LARGE] at (2.5,12.25) {$X_3$};
\draw  (1,11.75) -- (-1.25,8.5);
\draw  (0.5,7.5) -- (4.5,7.5);
\draw  (3.75,11.5) -- (6,8.5);
\draw  (0.25,8) -- (2.25,11.25);
\draw  (3,11.25) -- (4.75,8);
\draw  (0,6.75) -- (5,6.75);
\draw  (-0.5,8.4) -- (1.5,11.4);
\draw  (3.25,11.4) -- (5.25,8.3);
\draw  (0.5,7.75) -- (4.5,7.75);
\draw  (0.25,7) -- (4.75,7);
\draw  (4,11.75) -- (6.5,8.5);
\draw  (0.75,12) -- (-1.75,8.5);
\draw  (2.5,15.25) circle (0.25cm) node [label=above:{ $v^3_1$}] {};
\draw  (3.75,15.25) circle (0.25cm) node [label=above:{ $v^3_2$}] {};
\draw  (5,15.25) circle (0.25cm) node [label=above:{ $v^3_3$}] {} ;
\draw  (6.25,15.25) circle (0.25cm) node [label=above:{ $v^3_4$}] {} ;
\draw  (2.5,15) -- (2.5,13.25);
\draw  (2.5,15) -- (1.25,13);
\draw  (2.5,15) -- (3.75,13);
\draw  (2.75,15.25) -- (3.5,15.25);
\draw  (4,15.25) -- (4.75,15.25);
\draw  (5.25,15.25) -- (6,15.25);
\node [font=\LARGE] at (7.25,15.25) {$...$};
\draw  (8.25,15.25) circle (0.25cm) node [label=above:{$v^3_k$}] {};
\draw  (10,7.5) circle (0.25cm) node [label=above:{ $v^2_1$}] {} ;
\draw  (11.25,7.5) circle (0.25cm) node [label=above:{ $v^2_2$}] {} ;
\draw  (12.5,7.5) circle (0.25cm) node [label=above:{ $v^2_3$}]  {};
\draw  (13.75,7.5) circle (0.25cm) node [label=above:{ $v^2_4$}] {} ;
\draw  (15.75,7.5) circle (0.25cm) node [label=above:{ $v^2_k$}] {} ;
\draw  (8,7.5) -- (9.75,7.5);
\draw  (10.25,7.5) -- (11,7.5);
\draw  (11.5,7.5) -- (12.25,7.5);
\draw  (12.75,7.5) -- (13.5,7.5);
\node [font=\LARGE] at (14.75,7.5) {...};
\draw  (-5,7.5) circle (0.25cm) node [label=above:{ $v^1_1$}] {} ;
\draw  (-6.25,7.5) circle (0.25cm) node [label=above:{ $v^1_2$}] {} ;
\draw  (-7.5,7.5) circle (0.25cm) node [label=above:{ $v^1_3$}] {} ;
\draw  (-8.75,7.5) circle (0.25cm) node [label=above:{ $v^1_4$}] {} ;
\draw  (-10.75,7.5) circle (0.25cm) node [label=above:{ $v^1_k$}] {} ;
\draw  (-4.75,7.5) -- (-3,7.5);
\draw  (-6,7.5) -- (-5.25,7.5);
\draw  (-7.25,7.5) -- (-6.5,7.5);
\draw  (-8.5,7.5) -- (-7.75,7.5);
\node [font=\LARGE] at (-9.75,7.5) {...};
\draw  (-5,7.75) -- (-2.75,8);
\draw  (-5,7.25) -- (-2.75,7);
\draw  (10,7.25) -- (7.5,6.75);
\draw  (10,7.75) -- (7.5,8.25);
\end{tikzpicture}
\label{fig:triangle}
\caption{ A schematic drawing of the graph $G_k$.}
\end{figure}
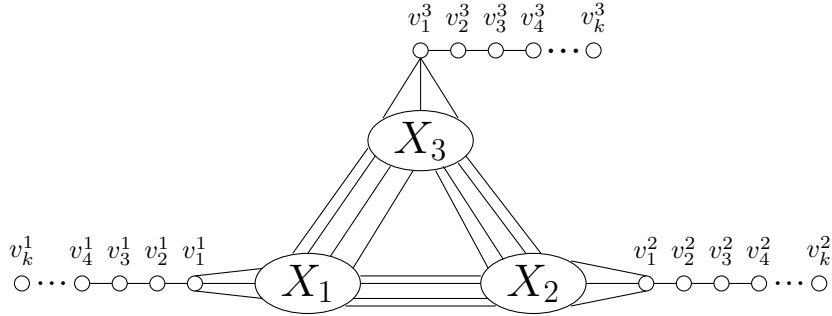

    It is easy to see that $\diam(G_k) = 2k+1$ and that $c(G_k) = 1$. Thus by Theorem \ref{thm:diameter-bound} $\ch{k}(G_k) \leq 3$. In the following we argue that it is actually equal to 3.

    We consider the $k$-hyperopic cops and robber game on $G_k$ with two cops and show that robber has a winning strategy. Robber plays so that after each round, one of the following is achieved.
    \begin{enumerate}
        \item[(i)] If cops are on $P_k^i$ and $P_k^j$, where $i \neq j$, then the robber is on $X_k$, where $k \in [3] \setminus \{i,j\}$. 
        \item[(ii)] If one cop is on $X_1 \cup X_2 \cup X_3$ and the other cop is on a path, say on $P_k^i$, then the robber is on $X_i$.
        \item[(iii)] If both cops are on $X_1 \cup X_2 \cup X_3$, then the robber is also on some vertex of $X_1 \cup X_2 \cup X_3$.
    \end{enumerate}

    In (i), the robber is either invisible, or if robber is visible, the capture is not possible in the next round. In (ii) and (iii), robber is invisible, thus capture is also not possible in the next round (as $m \geq 3$). Robber can start the game such that one of the above states is satisfied. Since all robbers positions described in (i), (ii) and (iii) are pairwise adjacent, robber can easily transition into the correct state after each move of the cops.
\end{example}

A \emph{weak homomorphism} $f \colon G \to H$ is a map from $V(G)$ to $V(H)$ such that if $uv \in E(G)$, then $f(u) f(v) \in E(H)$ or $f(u) = f(v)$. A weak homomorphism $f \colon G \to H$ is a \emph{retraction} if $f(v) = v$ for every $v \in V(G)$. If there is a retraction from $G$ to $H$ then $H$ is a \emph{retract} of $G$. We obtain an analogous relation between graphs and their retracts as in the usual cops and robber game.

\begin{theorem}
    \label{thm:retract}
    If $H$ is a retract of $G$, then $\ch{k}(H) \leq \ch{k}(G)$.
\end{theorem}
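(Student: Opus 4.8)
The classical retract theorem for the ordinary game of cops and robber is proved with a \emph{shadow strategy}, and the first thing I would try is to adapt that argument. Fix a retraction $f\colon G\to H$ and a winning strategy for $\ch{k}(G)$ cops in the game on $G$. The real cops, playing on $H$, would maintain a team of \emph{shadow} cops on $G$ that follow the winning $G$-strategy, and each real cop would occupy the image $f(\cdot)$ of the corresponding shadow cop. Two properties of a retraction make the skeleton of this plan work. First, $f$ is a weak homomorphism, so every shadow move is sent to a legal move in $H$ (an edge maps to an edge or is contracted to a single vertex). Second, $f$ fixes $V(H)$ pointwise, so that whenever a shadow cop occupies the robber's vertex $r\in V(H)$ we have $f(\text{cop})=f(r)=r$; a capture in the shadow game therefore projects to a capture in $H$. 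I would also record that a retraction is distance-nonincreasing, so that $H$ sits isometrically in $G$, i.e.\ $d_H(u,v)=d_G(u,v)$ for all $u,v\in V(H)$.

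The step I expect to be the real obstacle is the hyperopic visibility rule. The shadow cops must know \emph{when} the robber is visible in $G$ and \emph{where} it is in order to run the $G$-strategy, but visibility is measured from the cops' positions, and $f$ may strictly decrease distances. A shadow cop at $G$-distance greater than $k$ from $r$ sees the robber, yet its image can sit at $H$-distance at most $k$ from $r$, where the robber is invisible in the real game. The real cops could thus be asked to react to the exact position of a robber that the $H$-game keeps hidden from them, so the naive cop-shadow does not go through.

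To dissolve this information gap I would instead argue on the robber's side, where it does not arise: the robber is omniscient and always knows the cop positions. Concretely, I would prove the contrapositive, that if the robber can evade $N$ cops on $H$ then it can evade $N$ cops on $G$. Fix a winning robber strategy $\tau$ on $H$. In the game on $G$, with cops at $p_1,\dots,p_N\in V(G)$, the robber will stay inside $V(H)$: at each round it computes the projected positions $f(p_1),\dots,f(p_N)\in V(H)$ and responds exactly as $\tau$ prescribes against cops standing there. Legality is automatic, since $f$ being a weak homomorphism makes the projected cop trajectory a legal play on $H$, while the robber's own moves lie in $H\subseteq G$. For capture, note that if some $p_i$ coincides with the robber's vertex $r$ then $f(p_i)=f(r)=r$, so a capture in $G$ would be a capture of $\tau$ in the simulated $H$-game; as $\tau$ never gets caught, no such coincidence ever occurs and the robber evades on $G$ as well. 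Computing $f(p_i)$ requires no hidden information precisely because the robber sees the cops, which is what removes the obstacle of the previous paragraph. Finally, since for these pursuit games on finite graphs with an omniscient robber the outcomes ``the cops win'' and ``the robber evades'' are complementary, this lifting of robber strategies yields $\ch{k}(H)\le\ch{k}(G)$; the one nonroutine ingredient beyond the projection itself is this complementarity (determinacy), which I would justify by the finiteness of the underlying state space.
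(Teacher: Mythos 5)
Your first paragraph is, essentially, the paper's own proof: the paper runs exactly that cop-side shadow strategy and never addresses the visibility rule, so the obstacle you isolate there is a genuine gap in the published argument. The trouble is that your robber-side replacement does not escape that obstacle; it reproduces it in mirror image. The step that fails is ``as $\tau$ never gets caught, no such coincidence ever occurs.'' A winning robber strategy $\tau$ on $H$ is only guaranteed to evade \emph{cop strategies of the $H$-game}, i.e., decision rules whose moves depend solely on what cops can observe under the $H$-visibility rule. The projected trajectory $f(p_1),\dots,f(p_N)$ is a legal sequence of cop moves, but it need not be the play of any such strategy: since $f$ can strictly contract distances, there can be a round in which some cop has $d_G(p_i,r)>k$, so the $G$-cops see the robber and their next move may depend on his exact position, while every projection satisfies $1\le d_H(f(p_j),r)\le k$, so no legitimate $H$-strategy could react that way. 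The projected cops are thus strictly better informed than anything quantified over by the hypothesis ``$\tau$ evades every $H$-strategy,'' and the conclusion that the simulated play contains no capture does not follow. Your closing remark (that the robber can compute $f(p_i)$ because he sees the cops) addresses a non-issue, since computability of the projection was never the problem, and the appeal to determinacy cannot help, because you need evasion against a strictly larger class of cop behaviors than the class your hypothesis controls.

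Moreover, this gap is not reparable: the statement itself is false, so neither your argument nor the paper's can be patched. Let $k=2$, let $H=K_5$, and let $G$ be $K_5$ with a pendant path $v a_1 a_2 a_3$ attached to a clique vertex $v$; mapping $a_1,a_2,a_3$ to $v$ and fixing the clique is a retraction. By Observation~\ref{obs:0-vis}, $\ch{2}(K_5)=c_0(K_5)=\left\lceil 5/2 \right\rceil=3$. But two cops win on $G$: park $c_1$ on $a_3$ forever, so every clique vertex is at distance at least $3$ from $c_1$ and the robber is visible whenever he is in the clique; start $c_2$ on $v$, which is adjacent to every clique vertex, so a visible robber is captured immediately. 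If instead the robber is invisible, then in particular he is within distance $2$ of $a_3$, forcing him onto $\{a_1,a_2\}$, where $c_2$'s sweep $v,a_1,a_2$ (with $c_1$ blocking $a_3$) captures him; he can never slip past $c_2$ into the clique. Hence $\ch{2}(G)\le 2<3=\ch{2}(H)$ although $H$ is a retract of $G$. This is precisely your obstacle made concrete: the cop parked on $a_3$ sees the whole clique, while its shadow $f(a_3)=v$ sees nothing.
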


\begin{proof}
    Let $f \colon G \to H$ be a retraction and let $\ch{k}(G) = t$. We prove that the \emph{shadow strategy} is a winning strategy for $t$ cops on $H$. Two games are considered in parallel: the game on $G$ and the game on $H$. The robber plays optimally on $H$, and his moves are copied to $G$. Cops play optimally on $G$ according to their strategy, and their moves are copied to $H$ using $f$: if a cop moves from $c$ to $c'$ on $G$, his move on $H$ is from $f(c)$ to $f(c')$ (this is legal as $f$ is a retraction). Note that this means that robber is always positioned on $V(H)$ in both games.

    As $t$ cops have a winning strategy on $G$, one cop, located on $x \in V(G)$, catches the robber in some step of the game on $G$ by moving to the vertex $y \in V(H)$ that the robber occupies. By the above strategy, this cop moves from $f(x)$ to $f(y)$ in this step on $H$. As the robbers' position on $H$ is $y = f(y)$, the cops win on $H$.
\end{proof}

We know from~\cite{bonato-2019-hyperopic} that $\ch{1}(G) = 1$ if and only if $G$ is a tree. The result for $k \geq 2$ is significantly different.

\begin{theorem}
    \label{thm:one}
    If $k \geq 2$ and $G$ is a graph, then $\ch{k}(G) = 1$ if and only if $G$ is a caterpillar.
\end{theorem}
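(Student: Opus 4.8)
The plan is to prove both directions by exploiting the structure of the $k$-hyperopic game for $k \geq 2$. The key observation is that with only one cop, the robber is visible precisely when the cop sits on the robber's vertex (caught) or at distance $> k$ from the robber; the robber is invisible on the entire "annulus" of distances $1,\dots,k$ around the cop. Since $k \geq 2$, this invisible region is large, which is what distinguishes the result from the $k=1$ case.

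For the easy direction, I would first argue that if $G$ contains a cycle then $\ch{k}(G) \geq 2$. By \cref{obs:basic-bounds} we have $\ch{k}(G) \geq c(G)$, and $c(G) \geq 2$ whenever $G$ is not a tree (a single cop wins the ordinary game iff $G$ is cop-win, and cop-win graphs containing an induced cycle of length $\geq 4$ fail; more directly any graph with $c(G)=1$ is a tree only partially, so I should instead use that $\ch{k}(G)\geq c_H(G)=\ch 1(G)$ is \emph{not} valid in this direction). The cleaner route: show directly that one cop cannot win on any graph containing a cycle, by giving the robber a hiding strategy. So it remains to handle trees. I must then show that a tree which is \emph{not} a caterpillar requires at least two cops. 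A tree fails to be a caterpillar exactly when, after deleting all leaves, the remainder is not a path, equivalently when the tree contains $\widehat{T}$ (the subdivided $K_{1,3}$ of \cref{fig:T-hat}) as a subgraph. Hence the crux of the lower bound is: \emph{if $T$ contains $\widehat{T}$, then $\ch{k}(T) \geq 2$ for every $k \geq 2$.} I would prove this by describing a robber evasion strategy against a single cop, using the central vertex of the $\widehat{T}$ and its three pendant paths of length $2$: whenever the cop approaches along one branch, the robber retreats into a branch that keeps him at distance between $1$ and $k$, exploiting that the three legs give the robber a "third direction" to flee while staying invisible.

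For the harder direction, I would show every caterpillar $G$ satisfies $\ch{k}(G) \leq 1$, i.e.\ one cop wins. Let the spine be the path $s_1,\dots,s_m$ obtained by deleting leaves, with pendant leaves hanging off the $s_i$. The cop's strategy is to sweep the spine monotonically from $s_1$ to $s_m$, but with a crucial modification to handle invisibility: because $k \geq 2$, the cop can only "see" and trap the robber when far away, so the cop must advance slowly enough that the robber is forced ahead of him and cannot slip past. I would maintain the invariant that the robber is confined to the component of $G$ lying "ahead" of the cop (the subgraph induced by spine vertices $s_j$ with $j \geq$ current position, together with their leaves). The main point is that the robber cannot cross the cop's position: to pass from ahead to behind, the robber would have to step onto or adjacent to the cop's vertex, and the caterpillar structure (each spine vertex is a cut vertex whose deletion separates the ahead-part from the behind-part) forces any such passage through the single spine edge the cop guards. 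By advancing one spine step only after confirming the robber is pinned forward, the cop shrinks the robber's territory until it is a single spine vertex plus its leaves, which is a star, and a star is caught in one more move.

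The step I expect to be the main obstacle is the lower bound for non-caterpillar trees, specifically formalizing the robber's evasion on the $\widehat{T}$ configuration while tracking the invisibility condition $1 \leq d(r,c) \leq k$ for all $k \geq 2$ uniformly. The danger is that for large $k$ the visible region ($d > k$) shrinks and the cop might exploit the finite legs of length $2$; I must verify the robber always has a legal move keeping him in the invisible annulus, which requires the three independent branches precisely so the robber is never cornered onto a leaf at distance $> k$. Conversely, in the upper bound the delicate point is ensuring the robber truly cannot pass the cop given he is \emph{invisible}, so I cannot rely on reacting to the robber's position; the argument must be that the confinement invariant holds against \emph{all} robber plays simultaneously, which is exactly where the cut-vertex property of the spine does the work.
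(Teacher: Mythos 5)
Your proposal has a genuine gap in each direction. In the reduction to trees, you dismiss the inequality $\ch{k}(G) \geq \ch{1}(G)$ as ``not valid in this direction,'' but it is valid and is exactly \cref{obs:basic-bounds} (instantiate the pair $(k,\ell)$ there as $(1,k)$): enlarging the invisibility radius only removes information from the cop, so any winning strategy for the $k$-hyperopic game also wins the $1$-hyperopic game. Combined with the known fact that $\ch{1}(G)=1$ if and only if $G$ is a tree \cite[Theorem 1]{bonato-2019-hyperopic}, this disposes of all graphs with cycles in one line --- and this is precisely the paper's argument. Having rejected it, your substitute claim that one cop cannot win on any graph containing a cycle is asserted but never proved. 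It can be proved (the key observation is that a capturing move must be made from distance at most $1$, where the robber is invisible since $k\geq 1$, so the capturing move cannot react to the robber's actual position, and on a cycle the robber always has three closed-neighborhood options of which at most two are forbidden), but none of that argument appears in your write-up, so the forward direction does not get off the ground for non-trees.

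More seriously, your single-cop strategy for caterpillars fails as described. A cop who ``sweeps the spine monotonically from $s_1$ to $s_m$'' never threatens the pendant leaves of intermediate spine vertices: the omniscient invisible robber starts on a leaf attached to $s_1$, lets the cop walk past, and sits there forever; your confinement invariant breaks the moment the cop advances off $s_1$, and the cop cannot ``confirm the robber is pinned forward,'' because an invisible robber generates no observations. A correct strategy must visit every pendant leaf, oscillating $s_i \to \ell \to s_i \to \ell' \to \cdots$ at each spine vertex; the point of the oscillation is that crossing would force the robber to occupy $s_i$ at a moment when the cop's next move returns there, which never happens. Your closing claim that ``a star is caught in one more move'' is false for the same reason: on $K_{1,n}$ with $n \geq 2$ an invisible omniscient robber survives any single move, and the center--leaf oscillation is needed there too. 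The paper sidesteps all of this by quoting $c_0(G)=1$ for caterpillars \cite{tosic1986-0-vis} together with $\ch{k}(G) \leq c_0(G)$ from \cref{obs:basic-bounds}. A smaller issue: your evasion on $\widehat{T}$ is phrased reactively (``whenever the cop approaches along one branch, the robber retreats''), but the middle vertices of the three legs are pairwise at distance $2$, so a robber who waits until the cop reaches the centre cannot switch legs and is cornered at a leaf; the evasion must be anticipatory, using omniscience to relocate through the centre before the cop's (predictable) walk arrives there. You correctly flag this as the main obstacle but leave it unresolved; note also that the paper lifts the lower bound from $\widehat{T}$ to $T$ via \cref{thm:retract}, whereas you play inside the embedded subtree directly, which is legitimate for trees but still requires that anticipatory strategy to be spelled out.
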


\begin{proof}
    We first show that if $\ch{k}(G) = 1$, then $G$ is a tree. Let $\ch{k}(G) = 1$. If $G$ is not a tree, then by~\cite[Theorem 1]{bonato-2019-hyperopic}, $\ch{1}(G) \geq 2$, thus by Observation~\ref{obs:basic-bounds}, $\ch{k}(G) \geq 2$. 
    \begin{enumerate}
    \item[$\Rightarrow$] 
    Let $\ch{k}(G) = 1$. Suppose that $G$ is a tree, but not a caterpillar. Since removing the leaves of $G$ does not yield a path, we conclude that $G$ contains a subgraph isomorphic to a tree $\widehat{T}$. As $k \geq 2$ and the robber is omniscient, one cop cannot win the game on $\widehat{T}$. Indeed, if $x$ is the vertex of degree 3 in $\widehat{T}$, then robber can always move to a vertex of degree 2 in $\widehat{T}-x$ that neither contains the cop nor is in the component into which the cop will move next. However, it is easy to see that two cops can win on $\widehat{T}$, thus $\ch{k}(\widehat{T}) = 2$. Moreover, $\widehat{T}$ is a retract of $G$, thus by Theorem \ref{thm:retract} $\ch{k}(G) \geq \ch{k}(\widehat{T}) = 2$.  
    Thus  $\ch{k}(G) \geq 2$. This contradicts our assumption that $\ch{k}(G) = 1$, so $G$ must be a caterpillar.
    \item[$\Leftarrow$] 
    Let $G$ be a caterpillar. Note that $c_0(G)=1$ \cite{tosic1986-0-vis} for caterpillars. Thus, by Observation \ref{obs:basic-bounds}, $\ch{k}(G) = 1$ for caterpillars. \qedhere
\end{enumerate}
\end{proof}

\section{General upper bound}
\label{sec:general-upper}

In this section, we present a general upper bound for the $k$-hyperopic cop number, and additionally obtain a new result for the 0-visibility cops and robber game.

\begin{theorem}
    \label{thm:upper-c0}
    If $G$ is a graph, then $$c_0(G) \leq \begin{cases}
        \alpha'(G), & G \text{ has a perfect matching};\\
        \alpha'(G) + 1, & \text{otherwise}.
    \end{cases}$$
    Moreover, if $G$ has $n$ vertices, then $c_0(G) \leq \left \lceil \frac{n}{2} \right \rceil$.
\end{theorem}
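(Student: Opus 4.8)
The engine of the whole proof is a one-move \emph{flip} along a maximum matching, so I would fix notation first. Let $M=\{u_1v_1,\dots,u_tv_t\}$ be a maximum matching with $t=\alpha'(G)$, write $V(M)=\{u_i,v_i : i\in[t]\}$ for the matched vertices and $U=V(G)\setminus V(M)$ for the exposed ones. The single structural fact I would record at the outset is that $U$ is independent, and hence $V(M)$ is a vertex cover: if two exposed vertices were adjacent, that edge would extend $M$, contradicting maximality. In particular, every neighbour of an exposed vertex lies in $V(M)$.

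I would treat the perfect-matching case first, where $U=\emptyset$ and $n=2t$, since it is entirely clean and already gives the sharp bound. Place one cop on each $u_i$; these $t$ occupied vertices form a vertex cover, so the only uncovered vertices are exactly $v_1,\dots,v_t$, and an omniscient robber avoiding immediate capture must start on some $v_j$. Now every cop flips simultaneously along its matching edge, moving $u_i\to v_i$. This is a legal move, and whichever $v_j$ the robber occupies is entered by cop $j$ before the robber gets to react, so the robber is caught in the first round. Hence $c_0(G)\le t=\alpha'(G)$ whenever $G$ has a perfect matching.

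The $\lceil n/2\rceil$ bound then follows purely arithmetically from the case bound, so I would dispose of it next. If $G$ has a perfect matching, then $n$ is even and $\alpha'(G)=n/2=\lceil n/2\rceil$. If it does not, then $\alpha'(G)\le n/2-1$ when $n$ is even and $\alpha'(G)\le (n-1)/2$ when $n$ is odd; in both cases $\alpha'(G)+1\le\lceil n/2\rceil$. Thus the case bound always implies $c_0(G)\le\lceil n/2\rceil$, and no separate argument is needed.

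The remaining and genuinely harder case is when $G$ has no perfect matching, where I would aim to spend exactly one extra cop. The natural plan is to keep $t$ cops on $\{u_i\}$ as a standing vertex cover, confining the robber to $\{v_i\}\cup U$, use the extra cop to eliminate the exposed vertices, and finish with the same flip once the robber is known to sit only on $\{v_i\}$. When there is a single exposed vertex $w$ this already works immediately: parking the extra cop on $w$ recreates the situation in which only $\{v_i\}$ is uncovered, and the flip finishes. The difficulty, which I expect to be the main obstacle, is recontamination when several exposed vertices are present: because $U$ is independent its vertices communicate only through $V(M)$, and while the $u_i$ are guarded the unmatched endpoints $v_i$ are not, so an exposed vertex cleaned by the roaming cop can be re-entered from an adjacent $v_i$. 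Holding the $u_i$ fixed therefore reduces the task to clearing the induced subgraph on $\{v_i\}\cup U$, which one cop cannot do in general; the matching cops must participate actively by flipping rather than merely walling. I would attempt to control this by induction on $|U|$ (or on $n$), deleting one exposed vertex and exploiting that this leaves $\alpha'(G)$ unchanged while decreasing the number of exposed vertices, together with the no-augmenting-path structure of $M$, so that the extra cop is reused across phases rather than permanently committed.
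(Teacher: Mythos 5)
Your perfect-matching case and the arithmetic deduction of the $\left\lceil n/2 \right\rceil$ bound are both correct and match the paper. The genuine gap is the case without a perfect matching, which you explicitly leave unfinished: you correctly diagnose the recontamination problem created by your own strategy (static cops on $\{u_i\}$ guard only the $u_i$, so the robber roams the induced subgraph on $\{v_i\}\cup U$, and a one-shot flip at the end lets it slip onto a freshly vacated $u_i$), but your proposed repair --- induction on $|U|$, deleting an exposed vertex and ``reusing'' the extra cop across phases --- is only a plan, not an argument. It also has a structural flaw: a cop strategy for $G-w$ does not transfer to $G$, because the robber in $G$ may use $w$ as a refuge or a thoroughfare at any time; handling $w$ concurrently is exactly the recontamination problem you set out to avoid, so the induction does not obviously close.

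The missing idea, which is the paper's, is that the matching cops should not stand still and flip once: each cop $c_i$ \emph{alternates perpetually} between $u_i$ and $v_i$, every round of the game. With this motion, the robber can never end a move on either endpoint of any matching edge: if it stands on the cop's current vertex it is already caught, and if it stands on the other endpoint the cop's very next move captures it before the robber moves again. So \emph{both} endpoints of every matching edge are forbidden throughout the game, and the robber is confined to the independent set $U$. Since every path between two vertices of $U$ passes through a matched vertex, the robber cannot relocate at all --- it is frozen on a single vertex of $U$. A single extra cop then sweeps $U$ (the graph is connected) and captures the stationary robber; no recontamination is possible because the robber never gets to move. This gives $\alpha'(G)+1$ cops directly, with no induction and no case analysis on $|U|$. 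In the perfect-matching case the same alternation catches the robber in the first round, which is your flip; the point you missed is that continuing to alternate is what makes the matched vertices a \emph{permanent} wall rather than a one-round trap.
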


\begin{proof}
    Let $G$ be a graph on $n$ vertices. Let $\alpha'(G) = m$ and let $M \subset E(G)$ be a maximum matching in $G$, $|M| = m$. Denote $M = \{x_1 y_1,  \ldots, x_m y_m\}$. If $G$ has a perfect matching, then $m = \frac{n}{2}$, and the winning strategy of $m$ cops is the following. For $i \in [m]$, cop $c_i$ starts on $x_i$ and alternates his position between $x_i$ and $y_i$ in every round. Thus, if the robber is located on $y_i$ (or $x_i$) and not yet caught, the cop $c_i$ will catch the robber in the next round. This gives the upper bound of $m = \frac{n}{2}$. 

    If $G$ does not have a perfect matching, then $I = V(G) \setminus \{x_1, y_1,  \ldots, x_m, y_m\}$ is not empty. Since $M$ is maximal, $G[I]$ contains no edges. The winning strategy of $m+1$ cops is the following. For $i \in [m]$, cop $c_i$ starts on $x_i$ and alternates his position between $x_i$ and $y_i$ in every round. As before it follows that if the robber ever steps on a vertex from $\{x_1, y_1, \ldots, x_m, y_m\}$, they are caught. Thus robber's motion is limited to $G[I]$, which in turn means that unless robber is caught already, he is stationary on a vertex from $I$. The strategy of cop $c_{m+1}$ is simply to visit all vertices from $I$ (which is possible as $G$ is connected). This gives the upper bound of $m+1$, which is at most $\left \lceil \frac{n}{2} \right \rceil$.
\end{proof}

\begin{corollary}
    \label{cor:upper-ch}
    If $G$ is a graph and $k \geq 1$, then $$\ch{k}(G) \leq \begin{cases}
        \alpha'(G), & G \text{ has a perfect matching};\\
        \alpha'(G) + 1, & \text{otherwise}.
    \end{cases}$$
    Moreover, if $G$ has $n$ vertices, then $\ch{k}(G) \leq \left \lceil \frac{n}{2} \right \rceil$.
\end{corollary}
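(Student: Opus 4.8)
The plan is to obtain this bound directly by chaining two results already established in the excerpt, rather than by constructing a new cops-and-robber strategy from scratch. The key observation is that Theorem~\ref{thm:upper-c0} already proves exactly the desired inequalities for the $0$-visibility cop number $c_0(G)$, and Observation~\ref{obs:basic-bounds} gives the comparison $\ch{k}(G) \leq c_0(G)$ for every $k \geq 1$. So the entire content of the corollary is the transitivity of these two facts.

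Concretely, I would first invoke Observation~\ref{obs:basic-bounds}, which states that $c(G) \leq \ch{k}(G) \leq \ch{\ell}(G) \leq c_0(G)$ for all $\ell \geq k \geq 1$; in particular, reading off the outermost relevant bound yields $\ch{k}(G) \leq c_0(G)$. Then I would apply Theorem~\ref{thm:upper-c0}, which asserts that $c_0(G) \leq \alpha'(G)$ when $G$ has a perfect matching and $c_0(G) \leq \alpha'(G) + 1$ otherwise, together with the uniform bound $c_0(G) \leq \lceil n/2 \rceil$ for graphs on $n$ vertices. Substituting the theorem's bound into the observation's inequality gives each of the three claimed bounds on $\ch{k}(G)$ simultaneously.

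Because both ingredients are fully proved above, there is essentially no obstacle here: the corollary is a formal consequence, and the only care needed is to state clearly that the same case split (perfect matching versus no perfect matching) carries over verbatim, and that the $\lceil n/2 \rceil$ bound holds regardless of the matching structure. If one wanted to make the argument self-contained rather than appealing to the comparison, one could alternatively note that the explicit patrolling strategy used in the proof of Theorem~\ref{thm:upper-c0} also works in the $k$-hyperopic game, since the cops in that strategy capture the robber the moment he occupies any matched vertex and otherwise pin him to the independent set $I$ until the extra cop reaches him; none of this reasoning depends on the robber being visible. I expect the clean two-line derivation via Observation~\ref{obs:basic-bounds} and Theorem~\ref{thm:upper-c0} to be the preferred presentation.
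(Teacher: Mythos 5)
Your proposal is correct and matches the paper's intent exactly: the corollary is stated without a separate proof precisely because it follows immediately by combining $\ch{k}(G) \leq c_0(G)$ from Observation~\ref{obs:basic-bounds} with the bounds of Theorem~\ref{thm:upper-c0}. Your additional remark that the patrolling strategy itself works verbatim in the $k$-hyperopic game is a nice sanity check but is not needed.
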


Note that this is a generalization of~\cite[Theorem 3]{bonato-2019-hyperopic} which states that $c_H(G) \leq \lceil \frac{n}{2} \rceil$ for all graphs on $n$ vertices. However, the proof technique is different as domination is used to prove the bound for $c_H$. 

Observe also that it follows from Observation \ref{obs:basic-bounds} and the known equality cases for the 1-hyperopic case from \cite{bonato-2019-hyperopic}, that $K_n$ if $n \geq 1$ and $K_m - e$ if $m \geq 2$ is even, are equality cases also for Corollary \ref{cor:upper-ch}.

\section{Trees}
\label{sec:trees}

It follows from \cite[Theorem 1]{bonato-2019-hyperopic} that $\ch{1}(T) = 1$ for all trees $T$, but we already know from Theorem \ref{thm:one} that there are trees $T$ with $\ch{k}(T) > 1$ if $k \geq 2$. In this section we give several upper bounds for the $k$-hyperopic cop number of trees. If $k=2$, then the problem is relatively simple, and the general upper bound is 2. However, if $k \geq 3$, the problem appears to be much more involved, and our bounds depend mostly on the relation between $k$ and the diameter of the tree.

\begin{theorem}
    \label{thm:2-hyperopic-trees}
    If $T$ is a tree, then $\ch{2}(T) \leq 2$.
\end{theorem}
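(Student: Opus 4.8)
The plan is to describe an explicit strategy for two cops that works on \emph{every} tree; this subsumes the case of caterpillars, where a single cop already suffices by \cref{thm:one} (and the cited fact that $c_0$ of a caterpillar is $1$). Root $T$ at a leaf. The whole strategy is built around the single feature of the $2$-hyperopic game that the robber becomes \emph{visible} the instant some cop is at distance at least $3$ from him; equivalently, he can be invisible only inside the radius-$2$ ball around the cops. Throughout I would maintain the invariant that one cop, the \emph{guard}, sits on a vertex $g$ while the robber is confined to the subtree hanging below $g$. The role of the guard is pure \emph{confinement}: since stepping onto an occupied vertex is an immediate capture, the robber can never cross $g$, so he remains below $g$ whether or not he is currently visible.

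The engine of the proof is a leapfrog that pushes the guard strictly deeper. Suppose the guard is at $g$ and the robber is among the descendants of $g$. Whenever the robber sits at depth at least $3$ below $g$ (distance $\geq 3$ from the guard) he is visible, so the cops know exactly which child-subtree of $g$ he occupies. The sweeper then descends along that branch to the relevant cut vertex $g'$; once the sweeper occupies $g'$ the robber is confined below $g'$, the old guard at $g$ becomes redundant and is released to serve as the new sweeper, and the invariant is re-established with $g'$ in place of $g$. Since the subtree below $g'$ is a proper subtree of the one below $g$, and $T$ is finite, after finitely many handoffs the confining subtree collapses to a single leaf, on which the robber is trapped and caught. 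Chasing a visible robber down a branch is exactly the ordinary one-cop tree pursuit, which wins because $c(T)=1$.

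The hard part will be the local situation at a branch vertex: the robber can refuse to reveal his branch only by staying within distance $2$ of the guard, that is, by hovering on the children and grandchildren of $g$, and this is genuinely delicate when $g$ has large degree. To resolve it I would bring the sweeper up to $g$ and then probe the children of $g$ with ``out-and-back'' moves, exploiting two features at once: the guard at $g$ blocks the robber from shuttling between sibling subtrees, since any such move would pass through $g$ and be a capture, while stepping two levels down puts the robber at distance $\geq 3$ from the probing sweeper and hence reveals his branch. This is the same mechanism that lets a single cop clear a star or a caterpillar---the omniscient robber cannot cross the occupied centre quickly enough to re-contaminate an already-cleared branch---and it forces the robber either to be captured, to expose his branch, or to be driven strictly below one child of $g$, in each case re-establishing the invariant for a smaller subtree.

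Verifying that this local flushing indeed terminates, with careful bookkeeping of the (radius-$2$) set of vertices the robber might still occupy relative to both cops, is where the real work lies; the rest is the routine bookkeeping of the shrinking subtree and the observation that only two cops are ever in play (guard plus sweeper, with their roles swapped at each handoff).
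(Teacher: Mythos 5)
Your global architecture---a guard cop confining the robber to the subtree below it, a sweeper cop, and a handoff that strictly shrinks the confined subtree---is exactly the architecture of the paper's proof (there $c_1$ is the guard, $c_2$ the sweeper, and the invariant is that the robber never lies in the component of $T-c_2$ containing $c_1$, which grows after each phase). The genuine gap is the step you yourself flag as ``where the real work lies'': the claim that out-and-back probes of the children of $g$ force the robber to be captured, exposed, or driven strictly below one child. That claim is false. Suppose the guard sits on $g$, the sweeper probes in the order $g, y_1, g, y_2, g, \dots$, and the robber sits on a child $y_j$ that has a child $w$. The omniscient robber plays as follows: in the round in which the sweeper returns to $g$ just before visiting $y_j$, he steps down to $w$; while the sweeper is on $y_j$, he stays on $w$; in the round in which the sweeper retreats from $y_j$ to $g$, he steps back up to $y_j$; in all other rounds he sits on $y_j$. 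In every resulting configuration both cops are at distance $1$ or $2$ from him---your observation that being two levels down puts him at distance $\geq 3$ from the sweeper applies only when the sweeper is on a \emph{sibling} of $y_j$, and the dodge avoids exactly those rounds---so he is never visible, never captured, and never permanently displaced. The flushing loop therefore runs forever and the invariant is never re-established. (This is also why the mechanism does work for stars and caterpillars: there the relevant children are leaves, so no such $w$ exists.)

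Two further remarks. First, the paper's own proof treats this step with the bare assertion that during the probing phase ``$c_2$ either catches the robber, or the robber becomes visible,'' which is vulnerable to the identical dodge, so your blind reconstruction matches the paper right down to its thinnest point; but neither your sketch nor that assertion rules the dodge out, and a complete proof must. Second, the step is repairable, so your architecture can be saved: probe \emph{two} levels deep, sending the sweeper $g \to y_i \to z_i \to y_i \to g$ for one child $z_i$ of $y_i$ (for a leaf child $y_i$ the one-step probe already captures, since a robber on $y_i$ has nowhere to dodge). When the sweeper stands on $z_i$, every vertex of the confined subtree that is within distance $2$ of the guard but outside the branch of $y_i$ is at distance at least $3$ from $z_i$, so an invisible robber must be inside the probed branch. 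Since the guard prevents the robber from ever changing branches, deep-probing two distinct branches either captures him or makes him visible at some instant, which pins down his branch once and for all; after that your leapfrog and handoff argument goes through verbatim.
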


\begin{proof}
    We present a winning strategy for two cops, $c_1$ and $c_2$ on $T$. They start the game by $c_1$ occupying a leaf and $c_2$ occupying its neighbor. In the $i$th ($i \geq 1$) round of the game their strategy is as follows.
    \begin{enumerate}
        \item[(i)] If robber is visible, then the cops move towards the robber along the shortest path between them.
        \item[(ii)] Otherwise, $c_1$ moves to $c_2$'s position (say $x$), and $c_2$ moves to a neighbor of his current position (different than $c_1$). In the next rounds, $c_1$ is stationary, while $c_2$ returns back to $x$ and then iteratively visits every neighbor of $x$. If the robber becomes visible during this procedure, then $c_2$ moves to a neighbor of $c_1$ that is in the same component of $T-c_1$ as the robber is.
    \end{enumerate}

    First observe that if the cops use the above strategy, then while the game is not over, at the end of each sequence of moves, robber is never positioned in the component of $T-c_2$ containing $c_1$, and the size of the component containing $c_1$ increases by at least one. This is trivially true at the start of the game. Suppose it is true until round $i-1$. 
    
    If robber is visible, then by induction hypothesis, it is contained in a component of $T-c_2$ that does not contain $c_1$, but contains some neighbor of $c_2$, say $y$. Thus after both cops move closer to the robber along the shortest path, $c_1$ moves to the previous position of $c_2$, and $c_2$ moves to $y$. Thus after this move, robber is again not contained in the component of $T-c_2$ containing $c_1$, while the component containing $c_1$ now contains at least one new vertex.
    
    If robber is not visible, then since by the induction hypothesis the robber is not in the connected component of $T-c_2$ containing $c_1$, the robber must be on a vertex from $N(c_2) - \{c_1\}$. During (ii), $c_2$ either catches the robber, or the robber becomes visible, and in this case, at the end of the sequence, robber is again not in the component of $T-c_2$ containing $c_1$, but component containing $c_1$ now contains at least one new vertex.

    Since $T$ is finite, it follows that the above strategy is indeed a winning strategy.
\end{proof}

The bound in Theorem \ref{thm:2-hyperopic-trees} cannot be improved as for example the 2-hyperopic cop number of a subdivision of $K_{1,3}$ is 2 (see the argument in the proof of Theorem \ref{thm:one}).

\begin{theorem}
    If $T$ is a tree with a pendant $P_{k-1}$ then $\ch{k}(G) \leq 2$. 
\end{theorem}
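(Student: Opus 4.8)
The plan is to give an explicit two-cop strategy generalising the one in the proof of Theorem~\ref{thm:2-hyperopic-trees}. Denote the pendant path by $p_1p_2\cdots p_{k-1}$, where $p_1$ is adjacent to a vertex $v\in V(T)$ and $p_{k-1}$ is a leaf, so $d(v,p_{k-1})=k-1$. I would keep the first cop $c_1$ fixed on $p_{k-1}$ for the whole game. The first step is to read off the visibility this forces. Writing $T'=T-\{p_1,\dots,p_{k-1}\}$ and $s=d(x,v)$ for the robber's vertex $x\in V(T')$, we have $d(x,p_{k-1})=s+(k-1)$, which exceeds $k$ exactly when $s\ge 2$. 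Hence, no matter where the second cop stands, the robber is \emph{visible} whenever it is at distance at least $2$ from $v$; the only vertices on which it can be invisible are those of the closed neighbourhood $N[v]=\{v,u_1,\dots,u_m\}$ in $T'$ (together with the pendant vertices, which sit trapped between the two cops). This reduces the theorem to a single-cop task: $c_2$ must capture a robber that is fully visible except possibly on the central star $N[v]$.

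Root $T'$ at $v$ and let $T'_j$ be the branch below the neighbour $u_j$. The strategy for $c_2$ is to walk to $v$ and then perform laps $v\to u_i\to v$ for $i=1,2,\dots,m$, precisely as one clears the star $K_{1,m}$ in the $0$-visibility game, together with the following override. If the robber ever becomes visible, it lies in some $T'_j$ at distance at least $2$ from $v$; since $c_2$ always stays within distance $1$ of $v$ during the laps, it can reach $u_j$ in at most two moves, whereas the robber needs at least two moves merely to reach $v$. I would check that this lets $c_2$ occupy $u_j$ before the robber can pass through it to $v$, after which the standard shortest-path pursuit keeps $c_2$ an ancestor of the robber inside $T'_j$, so the robber is confined below $c_2$ and captured. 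Thus the robber can never reveal itself and survive: becoming visible leads to capture.

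It remains to treat a robber that stays invisible forever, which by the above is confined to the star $N[v]$. Here the laps clear $N[v]$ exactly as in the $0$-visibility clearing of the caterpillar $K_{1,m}$: the return move $u_i\to v$ captures any robber that has stepped onto $v$, so a robber sitting on a not-yet-swept leaf can never reach an already-swept one without crossing $v$ while $c_2$ guards it. Tracking the contamination set $C\subseteq\{u_1,\dots,u_m\}$ of possible invisible positions, each completed lap deletes one vertex of $C$ and no robber move can reinsert a deleted vertex, so $C$ empties after at most $m$ laps and the robber is caught. I expect the main obstacle to be making the two timing arguments watertight---the ``two-move reach'' in the visible case and the ``no recontamination'' in the invisible case---but both reduce to the same fact used to clear a star under $0$-visibility, namely that the robber must pass through $v$ to change branches while $c_2$ controls $v$ tightly enough to forbid it. Together these show two cops suffice, i.e.\ $\ch{k}(T)\le 2$.
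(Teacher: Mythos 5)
Your visibility computation is correct, and it is actually sharper than the paper's: with $c_1$ parked on the leaf $p_{k-1}$, the invisible vertices are exactly $\{p_1,\dots,p_{k-2}\}\cup N[v]$. The gap is in the very next sentence, where you reduce to ``a single-cop task on the star $N[v]$'': the pendant vertices $p_1,\dots,p_{k-2}$ are also invisible, and your strategy never visits them. A robber who parks on $p_1$ and never moves is never captured: during your laps $c_2$ stays in $N[v]$, so $1\le d(p_1,c_2)\le 2\le k$ and $d(p_1,c_1)=k-2\le k$ in every round, hence the robber is invisible forever and no cop ever steps on its vertex. Saying the pendant vertices ``sit trapped between the two cops'' does not rescue this---in this game the robber wins by evading indefinitely, so a trapped-but-never-visited robber is a \emph{winning} robber. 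The failure is not repaired by any choice of $c_2$'s starting vertex or by the override: take $k\ge 3$ and $T$ to be $K_{1,3}$ with one edge subdivided $k-2$ times, so that $T'$ is a star centered at $v$; then \emph{every} vertex of $T$ is within distance $k$ of $p_1$, so the robber sitting on $p_1$ can never be made visible, and your contamination set should really be $C\subseteq\{p_1,\dots,p_{k-2}\}\cup\{u_1,\dots,u_m\}$, of which the laps only ever shrink the second part.

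The fix is one extra phase, and the order matters. Start $c_2$ on $p_{k-2}$ and walk it along the pendant path to $p_1$ and then onto $v$: on a path the robber cannot slip past a cop (the cop's move onto the robber's vertex is already a capture), and the swept segment is sealed by $c_1$ behind and $c_2$ in front, so when $c_2$ reaches $v$ the whole path is clear. It then \emph{stays} clear during your laps by your own return-move argument, since re-entering the path requires the robber to stand on $v$, which is fatal; after that, your star laps and the override chase finish the game exactly as you wrote. Sweeping the path first costs nothing (nothing is cleared yet), whereas sweeping it after some laps would fail: a cop deep in the path no longer guards $v$, and cleared leaves $u_i$ could be recontaminated. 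It is worth noting that the paper's proof is the mirror image of yours: it parks a cop at the end of the pendant path in the same way, has the second cop sweep the path against an invisible robber, but is silent about the invisible vertices $v,u_1,\dots,u_m$ off the path and about visible/invisible transitions, which your override and lap argument handle carefully. Each write-up supplies exactly the phase the other omits; a complete proof needs both.
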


\begin{proof}
    Let $T$ be a tree with a pendant $P_{k-1}$. We provide a winning strategy for the cops. Let $v_1...v_{k-1}$ be the vertices of our pendant path. Place one cop on $v_1$. This cop remains on $v_1$ for the remainder of the game. If the robber is not visible then we know that the robber is on the pendant path and the remaining cop can walk along the path to catch the robber. If the robber is visible then the game becomes the standard cops and robbers game where $c(T)=1$. Thus the remaining cop catches the robber. Hence this is a winning strategy with two cops. 
\end{proof}

We recall the following sequence of families of trees from \cite{dereniowski+2015-0-visibility-2}. The family $\T_1 = \{K_1 \}$. For $m \geq 1$, the family $\T_{m+1}$ consists of all trees $T$ derived from $T_1, T_2, T_2 \in \T_m$ (not necessarily distinct) by adding a disjoint copy of $K_{1,3}$ with vertices $x, y_1, y_2, y_3$ (where $x$ is the center) and making $y_i$ adjacent to one vertex in $T_i$, $i \in [3]$. See for example Figure \ref{fig:family-T}.

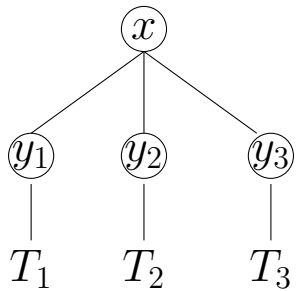
\begin{figure}
\centering
\begin{tikzpicture} [scale=.75]
\tikzstyle{every node}=[font=\LARGE]
\node [font=\LARGE] at (11,16.75) {$T_1$};
\node [font=\LARGE] at (15.25,16.75) {$T_3$};
\node [font=\LARGE] at (13,16.75) {$T_2$};
\node [font=\LARGE] at (11,18.75) {$y_1$};
\node [font=\LARGE] at (13,18.75) {$y_2$};
\node [font=\LARGE] at (15.25,18.75) {$y_3$};
\node [font=\LARGE] at (13,21) {$x$};
\draw  (13,21) circle (.4cm);
\draw  (11,18.75) circle (.4cm);
\draw  (13,18.75) circle (.4cm);
\draw  (15.25,18.75) circle (.4cm);
\draw   (13,20.6) -- (11,19.15);
\draw   (13,20.6) -- (13,19.15);
\draw   (13,20.6) -- (15,19.15);
\draw   (11,18.25) -- (11,17.25);
\draw   (13,18.25) -- (13,17.25);
\draw   (15.25,18.25) -- (15.25,17.25);
\end{tikzpicture}
\caption{A construction of a tree in $\T_{m+1}$ from $T_1, T_2, T_3 \in \T_m$.}
\label{fig:family-T}
\end{figure}

\begin{lemma}
    \label{lem:t_k-diameter}
    If $T \in \T_m$, then $\diam(T) \geq 4m$.
\end{lemma}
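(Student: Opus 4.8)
The claim is that any tree $T \in \T_m$ has diameter at least $4m$. The natural approach is induction on $m$, following the recursive structure of the family $\T_m$.

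The plan is to prove this by induction on $m$. For the base case $m=1$, we have $\T_1 = \{K_1\}$, and the diameter of $K_1$ is $0 = 4 \cdot 0$; however, this does not match $4m = 4$, so I first need to check what bound actually holds at the base. Since the statement claims $\diam(T) \geq 4m$ and $\diam(K_1) = 0 < 4$, the intended base case is likely $m=1$ giving a degenerate bound, or the induction should be set up so that the recursive step adds enough to the diameter. Let me instead track the relevant quantity carefully: rather than inducting on diameter directly, I would induct on the \emph{radius} or on the length of a longest path from the attachment point, since the diameter bound accumulates through the recursive gluing.

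The key step is to analyze how the diameter grows under the construction of $\T_{m+1}$ from $T_1, T_2, T_3 \in \T_m$. When we add the copy of $K_{1,3}$ with center $x$ and leaves $y_1, y_2, y_3$, attaching each $y_i$ to a vertex of $T_i$, a longest path in the new tree can pass through $x$. Specifically, I would take a vertex in $T_1$ that is far from the attachment point and a vertex in $T_2$ that is far from its attachment point; the path between them goes through the attachment vertex of $T_1$, then $y_1$, $x$, $y_2$, then into $T_2$. The essential sub-quantity to control by induction is therefore $\ell(T)$, the maximum distance from the attachment vertex (the vertex of $T_i$ adjacent to $y_i$) to any vertex of $T_i$; equivalently, I need a lower bound on the eccentricity realized at some vertex, so that the two long arms on either side of $x$ combine additively.

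I expect the main obstacle to be setting up the induction with the correct auxiliary invariant, since a bare induction on $\diam$ alone does not propagate cleanly: the diameter of the child tree depends on distances measured from specific attachment vertices inside $T_1$ and $T_2$, not on the internal diameters per se. Concretely, I would prove the stronger statement that for every $T \in \T_m$ and every choice of attachment vertex used in the construction, the maximum distance from that vertex into $T$ is at least $2(m-1)$ (or a similar linear bound), and then combine two such arms plus the length-$4$ bridge $y_i$-$x$-$y_j$ through the new gadget to conclude $\diam(T') \geq 2(m-1) + 2(m-1) + 4 = 4m$ for $T' \in \T_{m+1}$. Verifying the arithmetic of how many edges the bridge contributes, and confirming that the attachment vertices can be chosen to realize the inductive arm-length bound, are the routine but delicate points; once the auxiliary invariant is correctly stated, the inductive step is a short additive computation.
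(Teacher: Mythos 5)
Your hesitation at the base case is the crux of the matter, and you were right not to trust the statement: Lemma~\ref{lem:t_k-diameter} as stated is false. The only member of $\T_2$ is $\widehat{T}$ ($K_{1,3}$ with each edge subdivided once), whose diameter is $4$, not $\geq 8$; likewise $K_1 \in \T_1$ has diameter $0$, not $\geq 4$. The correct statement is $\diam(T) \geq 4(m-1)$ for $T \in \T_m$, and that is exactly what your argument yields. Your auxiliary invariant --- every possible attachment vertex of a tree in $\T_m$ has eccentricity at least $2(m-1)$, equivalently $\rad(T) \geq 2(m-1)$ since the attachment vertex may be arbitrary --- holds at the base ($\rad(K_1)=0$, $\rad(\widehat{T})=2$), propagates through the construction (from a vertex of $T_1$ one reaches far into $T_2$ through the four-edge bridge $a_1 y_1 x y_2 a_2$), and combining two arms of length $2(m-1)$ with that bridge gives $\diam(T') \geq 2(m-1)+2(m-1)+4 = 4m$ for $T' \in \T_{m+1}$, i.e.\ $4(m-1)$ at level $m$. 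This bound is tight (attach at centers at every level), so no repair of the induction can recover the claimed $4m$.

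Compared with the paper, your proof has the same skeleton: the paper sets $d_m = \min\{\diam(T) \mid T \in \T_m\}$ and derives $d_m \geq d_{m-1}+4$ from $d_m = 2\min\{\rad(T) \mid T \in \T_{m-1}\}+4$ together with $\rad \geq \diam/2$; your explicit arm-length induction simply replaces this appeal to the radius--diameter inequality. The paper then goes wrong precisely where you paused: after correctly computing $d_1 = 0$ and $d_2 = 4$, it asserts that ``the statement holds for $m=1$ and $m=2$,'' which contradicts those very computations ($0 \not\geq 4$ and $4 \not\geq 8$); with the true base values the recursion gives only $d_m \geq 4(m-1)$. So your proposal is not a different proof of the paper's lemma, but a correct proof of the corrected lemma. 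Note also that the discrepancy is not cosmetic: the proof of Theorem~\ref{thm:trees-diam/4-0-vis} needs the $4m$ bound to reach its contradiction, and with the true bound $4(m-1)$ it collapses --- indeed $\widehat{T}$ itself, with $\diam(\widehat{T}) = 4$ and $c_0(\widehat{T}) \geq 2$ by Theorem~\ref{thm:trees-0-visibility}, is a counterexample to that theorem as stated.
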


\begin{proof}
    Let $d_m = \min \{ \diam(T) \mid T \in \T_m \}$. Clearly, $d_1 = 0$. Since $K_{1,3}$ with all three edges subdivided once (so $\widehat{T}$ from the proof of Theorem \ref{thm:one}) is the only member of $\T_2$, $d_2 = 4$. By the definition of $\T_m$, $m \geq 2$, we have $$d_m = 2 \min \{ \rad(T) \mid T \in \T_{m-1} \} + 4 \geq 2 \cdot \frac{d_{m-1}}{2} + 4 = d_{m-1} + 4,$$
    where the inequality follows from the fact that $\diam(G) \leq 2 \rad(G)$ for every graph $G$.

    We prove that $d_m \geq 4m$ by induction on $m$. The statement holds for $m = 1$ and $m = 2$. Let $m \geq 3$. Then we have $$d_m \geq d_{m-1} + 4 \overset{\text{I.H.}}\geq 4 (m-1) + 4 = 4m.$$
    Since for every $T \in \T_m$, $\diam(T) \geq d_m$, the claim follows.
\end{proof}

\begin{theorem}[{\cite[Theorem 3.8]{dereniowski+2015-0-visibility-2}}]
    \label{thm:trees-0-visibility}
    If $T$ is a tree, then $c_0(T) \geq m$ if and only if there is $T' \in \T_m$ such that $T'$ is a minor of $T$.
\end{theorem}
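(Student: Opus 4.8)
The plan is to read the statement, via the surrounding reductions to the $0$-visibility game, as the equality
$$\cv{0}(T) = \max\{\, m : \text{some } T' \in \T_m \text{ is a minor of } T \,\},$$
and to prove the two implications separately, treating $\cv{0}$ as a search-type parameter and mimicking the classical obstruction arguments for tree search numbers. First I would set up the convenient description of the $0$-visibility game in which the robber, being omniscient and invisible, is identified with his \emph{cloud}: the set of vertices not separated from his start by the current cop positions. The cops win exactly when they can shrink this cloud to the empty set, and this ``clearing'' viewpoint is what makes both inductions go through.

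For the direction ``$T'\in\T_m$ a minor of $T$ $\Rightarrow \cv{0}(T)\ge m$'', I would first establish that $\cv{0}$ is \emph{minor-monotone on trees}: deleting a vertex or an edge, and contracting an edge, never increases $\cv{0}$. Deletion is immediate (a clearing strategy on $T$ restricts to any subforest), and contraction is the standard search-monotonicity argument; the one point needing care is that, unlike the ordinary cop number, $\cv{0}$ genuinely is monotone here, so this must be checked rather than quoted. Granting monotonicity, it suffices to show $\cv{0}(T')\ge m$ for every $T'\in\T_m$, which I would prove by induction on $m$. The base cases $m=1$ (any nonempty tree, with $\T_1=\{K_1\}$) and $m=2$ ($\T_2=\{\widehat{T}\}$, where $\cv{0}=2$ since $\widehat{T}$ is not a caterpillar) are direct. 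For the step, take $T'\in\T_m$ built from $T_1,T_2,T_3\in\T_{m-1}$ around the central claw with apex $x$, and give the robber an escape strategy against $m-1$ cops: to finish, the cops must at some stage separate one branch from $x$ and clear it without recontamination, but separating it forces a guard to sit on $x$, leaving only $m-2$ cops for that branch, which by induction hosts a $\T_{m-1}$-minor and hence needs $m-1$. Formally, the robber keeps his cloud meeting an uncleared $\T_{m-1}$-subtree in some branch; the instant the cops abandon $x$ the cloud spreads back across it, so it is never emptied and $m-1$ cops lose.

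The reverse implication ``$\cv{0}(T)\ge m \Rightarrow$ some $T'\in\T_m$ is a minor of $T$'' is the heart of the theorem and the step I expect to be the main obstacle. I would again induct on $m$, the content being a \emph{three-branch extraction lemma}: if $\cv{0}(T)\ge m\ge 2$, then there is a vertex $x$ with three neighbours $w_1,w_2,w_3$ in distinct components $C_1,C_2,C_3$ of $T-x$ such that each $C_i$ contains a $\T_{m-1}$-minor avoiding $w_i$. Given such an $x$, the minor is assembled by taking $x$ as the apex, each $w_i$ as the subdivision vertex $y_i$, and the $\T_{m-1}$-minor inside $C_i$ as $T_i$; the requirement that this minor \emph{avoid} $w_i$ is exactly what supplies room for $y_i$, and it is the point where a careless argument fails — indeed for $m=2$ it is precisely why a caterpillar, whose only extra branches are single leaves, contains no $\widehat{T}$-minor. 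To prove the extraction lemma I would argue by contrapositive: if every vertex has at most two branches capable of hosting a $\T_{m-1}$-minor, then the ``heavy'' branches are arranged path-like along a spine, and one sweeps $m-1$ cops monotonically along this spine, so that the frontier passing through each cut vertex never retreats and the light side-branches (which need only $m-2$ cops) are cleared on the way. Turning this sweep into a provably clean $(m-1)$-cop strategy, and in particular reconciling the ``$m-1$ cops per heavy branch'' cost with the fact that two heavy branches at a vertex cost no more than one (the monotone-versus-nonmonotone subtlety that distinguishes two branches from three), is the delicate part; the cleanest way to discharge it is probably to route through the known relation between $\cv{0}(T)$ and the vertex separation number (equivalently path-width) of $T$ and then invoke the Ellis–Sudborough–Turner labelling, whose obstruction family coincides with $\T_m$.
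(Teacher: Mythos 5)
The paper does not prove this statement at all: it is quoted verbatim from \cite[Theorem 3.8]{dereniowski+2015-0-visibility-2}, so there is no internal proof to compare against and your proposal must stand on its own. It does not, and both failures can be exhibited on a single concrete tree. Let $S$ be the $22$-vertex tree obtained from a new vertex $x$ by joining $x$ to one vertex in each of three disjoint copies of $\widehat{T}$ (i.e.\ the $\T_3$-construction \emph{without} the subdivision vertices $y_i$). Every member of $\T_3$ has $3\cdot 7+4=25$ vertices, and a minor of a tree never has more vertices than the tree itself, so $S$ contains no $\T_3$-minor; hence, by the very theorem under discussion (true, as it is the cited literature result), $c_0(S)=2$. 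Now run your forward-direction robber strategy on $S$ against $2$ cops: each component of $S-x$ is a member of $\T_2$, so your accounting (``separating a branch forces a guard to sit on $x$, leaving $m-2=1$ cop for a branch that needs $2$'') concludes $c_0(S)\geq 3$, which is false. Since your sketch nowhere uses the vertices $y_i$, it applies verbatim to $S$ and therefore cannot be sound; the $y_i$'s are exactly what prevents the cop guarding $x$ from simultaneously helping to clear a branch (possibly non-monotonically, with re-clearing), and any correct lower-bound argument must exploit them.

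The same tree $S$ destroys the only concrete plan you give for the hard direction. By Ellis--Sudborough--Turner, $S-x$ has three components of pathwidth $2$, so $\mathrm{pw}(S)\geq 3$, while $c_0(S)=2$. Thus on trees the zero-visibility cop number is \emph{not} the vertex separation number, and the obstruction family for $c_0$ does \emph{not} coincide with the EST family: they agree at level $2$ (both give $\widehat{T}$) and diverge from level $3$ onward, precisely because of the subdivision vertices whose importance you yourself flagged via the ``avoiding $w_i$'' condition. So ``route through pathwidth and invoke the EST labelling'' is not a delicate step to be discharged later; it is simply wrong, and with it your proof of the extraction lemma's contrapositive collapses. (A further, smaller gap on the same side: minor-monotonicity of $c_0$ under edge contraction is not ``the standard search-monotonicity argument'', because the robber has speed $1$: a robber walk in the contracted tree that passes through the merged vertex does not lift step-for-step to a walk in the original tree, so even this preliminary lemma needs a genuine proof rather than a projection argument.) Your three-branch extraction lemma is a correct and equivalent reformulation of the theorem, but as it stands the proposal contains a sound argument for neither implication.
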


This results means that there are trees with arbitrarily large $k$-hyperopic cop number (at least for specific values of $k$). In particular, let $T$ be a tree from $\T_m$. Then $c_0(T) = m$ and thus $\ch{k}(T) = m$ for all $k \leq \diam(T)$ by Observation \ref{obs:0-vis}. 

Theorem \ref{thm:trees-0-visibility} helps us prove the following general upper bound for the 0-visibility cop number (and in turn for the $k$-hyperopic cop number) of trees ($k \geq 3$).

\begin{theorem}
    \label{thm:trees-diam/4-0-vis}
    If $T$ is a tree and $\diam(T) \geq 4$, then $$c_0(T) \leq \left \lfloor \frac{\diam(T)}{4} \right \rfloor.$$
\end{theorem}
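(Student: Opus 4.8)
The plan is to prove the contrapositive using the minor characterization from Theorem~\ref{thm:trees-0-visibility}. Suppose $T$ is a tree with $c_0(T) = m$. By Theorem~\ref{thm:trees-0-visibility}, there exists a tree $T' \in \T_m$ that is a minor of $T$. The strategy is to transfer a diameter bound from the minor $T'$ up to $T$ itself, and then invoke Lemma~\ref{lem:t_k-diameter}, which gives $\diam(T') \geq 4m$. If I can argue that $\diam(T) \geq \diam(T')$, then $\diam(T) \geq 4m = 4 c_0(T)$, which rearranges to $c_0(T) \leq \diam(T)/4$, and since $c_0(T)$ is an integer, $c_0(T) \leq \lfloor \diam(T)/4 \rfloor$, as desired.

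The key step, and the main obstacle, is establishing that taking a minor of a tree does not increase its diameter, i.e.\ that $\diam(T') \leq \diam(T)$ when $T'$ is a minor of the tree $T$. For general graphs this is false, but for trees it should hold: a minor of a tree is obtained by deleting vertices/edges (which cannot increase diameter in any graph) and by contracting edges. For trees, contracting an edge $uv$ identifies two adjacent vertices and cannot lengthen any shortest path, since the unique path in a tree between two vertices can only shorten (or stay the same) under contraction. First I would argue this cleanly: since $T'$ is a minor of $T$, it arises from a subtree of $T$ by a sequence of edge contractions, and in a tree each contraction identifies two vertices at distance $1$, so the distance between the images of any two branch sets is at most the distance between representatives in $T$; hence $\diam(T') \leq \diam(T)$.

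Once the monotonicity $\diam(T') \leq \diam(T)$ is in hand, the rest is immediate bookkeeping. Writing $m = c_0(T)$, the chain $4m \leq \diam(T') \leq \diam(T)$ yields $m \leq \diam(T)/4$, and taking the floor finishes the bound. The hypothesis $\diam(T) \geq 4$ ensures the floor is meaningful (giving at least $1$) and corresponds to the base case $m = 1$ of the family $\T_1 = \{K_1\}$ being handled separately; I would note in passing that the single-cop cases ($m=1$, where $c_0(T)=1$ for, e.g., caterpillars) are consistent with the bound. The only subtlety to be careful about is the precise definition of ``minor'' being used for trees and confirming that the branch-set/contraction picture gives exactly the distance-nonincreasing property claimed above.
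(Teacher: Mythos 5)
Your proposal is correct and takes essentially the same route as the paper: both rest on the minor characterization of Theorem~\ref{thm:trees-0-visibility}, the bound $\diam(T') \geq 4m$ for $T' \in \T_m$ from Lemma~\ref{lem:t_k-diameter}, and the fact that a minor of a tree cannot have larger diameter. The differences are cosmetic: you argue directly and spell out the diameter monotonicity for tree minors (which the paper merely asserts), while the paper packages the same computation as a proof by contradiction.
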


\begin{proof}
    Suppose that $c_0(T) \geq \left \lfloor \frac{\diam(T)}{4} \right \rfloor + 1$. Then by Theorem \ref{thm:trees-0-visibility} $T$ contains some tree $T' \in \T_{\left \lfloor \frac{\diam(T)}{4} \right \rfloor + 1}$ as a minor. Since $T'$ is a minor of $T$, $\diam(T) \geq \diam(T')$. By Lemma \ref{lem:t_k-diameter}, $\diam(T') \geq 4 (\left \lfloor \frac{\diam(T)}{4} \right \rfloor) \geq \diam(T) + 1$, which is a contradiction.
\end{proof}

\begin{corollary}
    \label{cor:trees-diam/4-hyperopic}
    If $T$ is a tree, $\diam(T) \geq 4$ and $k \geq 1$, then $$\ch{k}(T) \leq \left \lfloor \frac{\diam(T)}{4} \right \rfloor.$$
\end{corollary}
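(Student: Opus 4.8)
The plan is to obtain the corollary immediately by combining the general tree bound for the 0-visibility game (Theorem~\ref{thm:trees-diam/4-0-vis}) with the monotonicity chain from Observation~\ref{obs:basic-bounds}. The key observation is that Theorem~\ref{thm:trees-diam/4-0-vis} already establishes the desired inequality for the \emph{strongest} visibility restriction, namely the 0-visibility cop number, and that $\ch{k}$ never exceeds $c_0$.

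Concretely, let $T$ be a tree with $\diam(T)\geq 4$ and fix $k\geq 1$. First I would invoke Theorem~\ref{thm:trees-diam/4-0-vis} to get
$$c_0(T)\leq \left\lfloor \frac{\diam(T)}{4}\right\rfloor.$$
Next I would apply Observation~\ref{obs:basic-bounds}, which gives the chain $c(T)\leq \ch{k}(T)\leq c_0(T)$ for every $k\geq 1$; in particular $\ch{k}(T)\leq c_0(T)$. Chaining these two inequalities yields
$$\ch{k}(T)\leq c_0(T)\leq \left\lfloor \frac{\diam(T)}{4}\right\rfloor,$$
which is exactly the claim.

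There is essentially no obstacle here, since all the work has been done in the preceding results: Theorem~\ref{thm:trees-diam/4-0-vis} supplies the bound on $c_0$, and Observation~\ref{obs:basic-bounds} supplies the comparison $\ch{k}\leq c_0$. The only point to double-check is that the hypotheses of both cited results are met, i.e.\ that $T$ is a tree (hence connected) and that $\diam(T)\geq 4$, both of which are assumed in the statement of the corollary; the value of $k$ is unconstrained beyond $k\geq 1$ because the inequality $\ch{k}(T)\leq c_0(T)$ holds uniformly in $k$. Thus the proof is a one-line deduction from the two earlier statements, and I would expect the author's proof to read simply as an appeal to Theorem~\ref{thm:trees-diam/4-0-vis} together with Observation~\ref{obs:basic-bounds}.
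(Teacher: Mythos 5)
Your proposal is correct and matches the paper's own (implicit) argument exactly: the corollary is stated without a separate proof precisely because it follows immediately from Theorem~\ref{thm:trees-diam/4-0-vis} together with the inequality $\ch{k}(T) \leq c_0(T)$ from Observation~\ref{obs:basic-bounds}. Your check of the hypotheses and the uniformity in $k$ is exactly the right level of care for this one-line deduction.
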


For completeness we also note that if $T$ is a tree and $\diam(T) \leq 3$, then $\ch{k}(T) = c_0(T) = 1$ for all $k \geq 1$.

Since the cop number of trees is 1 \cite[Lemma 1.2]{BoNo11}, Theorem \ref{thm:diameter-bound} implies that trees $T$ with $\diam(T) \geq 2k-1$ have $\ch{k}(T) \leq 3$. However, we can slightly improve this upper bound.

\begin{lemma}
     If $T$ is a tree and $\diam(T) \geq 2k-1$, then $\ch{k}(T) \leq 2$.
\end{lemma}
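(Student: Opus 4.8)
The plan is to use two cops in distinct roles: a stationary \emph{guard} $c_1$ and a mobile \emph{pursuer} $c_2$. Fix a diametral path $u_0 u_1 \cdots u_d$ with $d = \diam(T) \ge 2k-1$; since the endpoints of a diametral path are leaves, $u_0$ has degree $1$. I would keep $c_1$ parked at $u_0$ for the entire game. The immediate payoff is a visibility guarantee: in any round with $d(r,u_0) > k$ the guard is too far from the robber, so the robber is visible. Writing $B = \{x \in V(T) : d(x,u_0) \le k\}$, the robber can therefore only be invisible while it sits inside the ball $B$.

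First I would use this to reduce the problem to the ball $B$. Whenever the robber is outside $B$ it is visible, and since trees have cop number $1$, the pursuer can run the ordinary one-cop tree pursuit, stepping each round onto the neighbour on the unique $c_2$--$r$ path; as in the standard argument the component of $T - c_2$ containing the robber strictly shrinks, so the robber is either captured or forced to retreat into $B$, its only possible refuge. Thus, up to finitely many visible rounds, I may assume the robber is confined to $B$, and it remains only to capture a robber that lives in $B$ (and occasionally darts out, which only helps, since $c_2$ then resumes the shrinking pursuit).

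The second ingredient is that diametrality tames the shape of $B$: if a branch leaves the diametral path at $u_j$ with height $h$ above the path, then routing from the branch tip through $u_j$ to $u_d$ gives a path of length $h + (d-j)$, so maximality of the diametral path forces $h \le j$, while membership in $B$ forces $h \le k-j$; hence every branch of $B$ has height at most $\min\{j, k-j\}$. I would combine this with the following parity fact: if at some moment $c_1$ and $c_2$ are at distance exactly $2k-1$, then a vertex $x$ lies within distance $k$ of \emph{both} cops only if $d(x,c_1)+d(x,c_2) \le 2k$; since in a tree $d(x,c_1)+d(x,c_2) = (2k-1) + 2\,d(x,P)$, where $P$ is the $c_1$--$c_2$ path, this forces $d(x,P)=0$ and pins $x$ to one of the two central vertices of $P$. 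So with the cops held at distance $2k-1$ the simultaneously-invisible set is just a single edge, which is trivially avoidable. Using the controlled structure of $B$ together with this observation, $c_2$ should be able to sweep $B$ from the $u_0$-side outward — with $c_1$ sealing $u_0$ and $c_2$ controlling the frontier of $B$ through which the robber could escape — shrinking the robber's reachable territory inside $B$ each time it reappears, and capturing it by finiteness of $T$.

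The hard part will be this last step. Inside $B$ the guard no longer helps with visibility (it is within distance $k$ of everything in $B$), so keeping the robber visible there must come from holding $c_2$ at distance $>k$, which is in direct tension with closing in for the capture. The crux is therefore to interleave ``reveal'' phases, in which $c_2$ stays far so that the robber must expose itself, with ``confine'' phases that advance the sweep, and to certify a monovariant — the size of the robber's reachable territory within $B$ — that never increases across an invisible excursion. Verifying that the bounded-height structure of $B$ and the two-vertex invisible set together make such a non-resetting monovariant available is where the real work lies.
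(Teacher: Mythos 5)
Your setup is fine as far as it goes (the diametral path, the guard's visibility guarantee outside the ball $B=\{x: d(x,u_0)\le k\}$, the height bound on branches inside $B$, and the distance-$(2k-1)$ parity fact are all correct), but the step you flag as ``the real work'' is not merely unfinished: it cannot be completed while $c_1$ stays parked at $u_0$ for the whole game. Concretely, take $k=9$ and let $T$ be the path $u_0u_1\cdots u_{17}$ with a pendant path $u_4 b_1 b_2$ attached at $u_4$. Then $\diam(T)=17=2k-1$, and the subtree on $\{u_2,u_3,u_4,u_5,u_6,b_1,b_2\}$ is a copy of $\widehat{T}$ lying entirely inside $B$ (every one of its vertices is within distance $6$ of $u_0$) and at distance $2$ from the guard. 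A robber who commits to living on this spider is invisible whenever $d(r,c_2)\le 9$, and whenever it is visible one has $d(r,c_2)\ge 10$, hence $c_2$ is at distance at least $6$ from every spider vertex. So by the time $c_2$ can first step onto the spider, the robber has had at least four consecutive unseen moves --- enough to relocate to \emph{any} vertex of the spider, whose diameter is $4$ --- and from then on the game is the zero-visibility game on $\widehat{T}$ against one cop, which the omniscient robber wins (this is exactly the evasion argument in the proof of Theorem \ref{thm:one}; equivalently $\widehat{T}\in\T_2$, so $c_0(\widehat{T})=2$). The guard gives no information inside $B$, and every ``reveal'' phase of $c_2$ produces information that is stale before it can be acted on, so no non-resetting monovariant of the kind you hope for exists: \emph{any} two-cop strategy with $c_1$ fixed at $u_0$ loses on this tree.

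The missing idea is that the second stationary cop should sit in the \emph{middle}, not chase, and the guard should \emph{relocate}. The paper places $c_1$ at the endpoint $v=u_0$ and $c_2$ at $x_k$, the vertex at distance $k$ along the diametral path, and $c_2$ stays there as a blocker: deleting the edge $x_{k-1}x_k$ splits $T$ into a component $A$ containing $x_k$ and a component $B$ containing $v$, and the robber can never cross between them. If the robber is in $A$, it is visible (every vertex of $A$ other than $c_2$'s own vertex $x_k$ is at distance more than $k$ from $v$), so $c_2$ wins by ordinary tree pursuit. If the robber is in $B$, then $c_1$ walks from $v$ to the opposite endpoint $u$ of the diametral path; since $\diam(T)\ge 2k-1$, every vertex of $B$ is then at distance at least $k$ from $u$ (in the extremal case only $x_{k-1}$, adjacent to the blocker, has distance exactly $k$), so the component that the robber is trapped in becomes visible and one cop catches it there. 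Your parity observation is close in spirit to why this works, but the asymmetric roles --- a permanent blocker in the middle plus a guard that swaps endpoints to flip visibility onto whichever side the robber is pinned in --- are precisely what dissolve the reveal-versus-capture tension that your fixed-guard approach cannot escape.
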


\begin{proof}
   Let $T$ be a tree with $\diam(T) \geq 2k-1$ and $v, x_1, \ldots, x_m, u$, where $m \geq 2k-2$, be the vertices along the path that achieves this diameter. Place $c_1$ on vertex $v$ and $c_2$ on vertex $x_k$. We can now split the graph into two components along the edge between $x_{k-1}$ and $x_{k}$. We will call the connected component of $T-x_{k-1} x_k$ containing $x_{k}$, component $A$, and the other component $B$. If the robber is in $A$, which is fully visible as for every $x \in V(A)$, $d(v, x) \geq k$ except for $x_k$ which is covered because of $c_2$, then only one cop is needed to catch the robber and $c_2$ moves along the shortest path towards the robber who is caught.
   
    If the robber is in B, this means that the robber is invisible, unless the robber is along some path attached to $x_i$ for $i\in[k-1]$ greater than distance $k$ away from $v$ or $x_k$. In this case we move $c_1$ to vertex $u$, thus making component $B$ visible (as every $y \in V(B)$ is at distance at least $k$ from $u$) and component A invisible (with the exception analogous of the paths stated before). However, since $c_2$ has stayed on vertex $x_{k}$ the whole time, the robber cannot leave component B, and thus now can be caught since only one cop is needed for the capture. 
\end{proof}

However, it turns out that the $k$-hyperopic cop number is bounded by 3 for trees with slightly smaller diameter as well. The following bounds give a weaker bound the closer the diameter is to $k$, which is in line with the fact that for trees $T$ with $\diam(T) = k$, the $k$-hyperopic cop number equals the 0-visibility number (see Observation \ref{obs:0-vis}).

\begin{theorem}
    \label{thm:upper-bound-trees-different-diameters-close-to-2k}
    If $T$ is a tree with $2k-3 \leq \diam(T) \leq 2k-2$, then $$\ch{k}(T) \leq 3.$$
\end{theorem}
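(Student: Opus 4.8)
The plan is to exploit the fact that when $2k-3 \le \diam(T) \le 2k-2$ the tree is extremely ``compact''. A short computation gives $\rad(T)=k-1$, so $T$ has a central vertex $c^*$ when $\diam(T)=2k-2$ and a central edge when $\diam(T)=2k-3$, and every vertex lies within distance $k-1$ of the center. In particular a cop at the center never sees the robber, so visibility must be generated from the periphery; this is exactly why the preceding lemma's two fixed cops no longer suffice once $\diam(T)$ drops below $2k-1$, and the extra third cop compensates. First I would fix a diametral path $v_0 v_1 \cdots v_D$ (with $D=\diam(T)$) and place $c_1$ on $v_0$, $c_2$ on $v_D$. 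Writing each vertex $x$ as attached to the path at $v_j$ at height $h$ (so $d(v_0,x)=j+h$ and $d(v_D,x)=(D-j)+h$), the inequalities $d(v_0,x)\le k$ and $d(v_D,x)\le k$ solve to show that, with $c_1,c_2$ at the ends, the robber is invisible exactly on the radius-one ball about the center: $R=N[c^*]$ when $D=2k-2$, and $R=N[v_{k-2}]\cup N[v_{k-1}]$ when $D=2k-3$. The key structural point is that inside every branch $T_i$ hanging off the center, the only vertex of $R$ is the neighbor $w_i$ of the center; everything at depth $\ge 2$ in $T_i$ is at distance $(k-1)+\delta\ge k+1$ from each end, hence seen, and an interior branch is in fact seen by \emph{both} ends.

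The strategy is then to use $c_3$ as a stationary barrier on the center and to clear the branches $T_i$ one at a time, maintaining the invariant that a cleared branch stays clean because the robber cannot re-enter it without crossing the guarded center. To clear a branch $T_i$ I keep one endpoint cop in place (rendering $T_i$ visible above depth $1$), keep $c_3$ on the center, and send the free cop into $T_i$ from the center: since the robber confined to $T_i$ is visible except on the single cul-de-sac vertex $w_i$ (whose only neighbours are the guarded center and the visible interior of $T_i$), the sweeper corners it just as a single cop corners a visible robber on a tree, using $c(T)=1$. The order matters, since the branches $T_v,T_u$ holding the diametral ends are each visible only from the opposite end. I would therefore clear all interior branches first (with $c_1$ sweeping and $c_2$ supplying visibility, noting these branches are double-covered and so stay visible while $c_1$ moves), then clear $T_v$ with $c_2$ still supplying visibility and $c_1$ sweeping, and finally clear $T_u$ after returning $c_1$ to $v_0$ to supply visibility while $c_2$ sweeps. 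Because the robber is always confined by the barrier to a single branch, and each branch is cleared with the correct combination of barrier, visibility and sweeper at the moment it is cleared, the robber is caught after finitely many rounds.

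The main obstacle is precisely the clearing of the invisible central region, which is a star about the center: a single cop cannot clear such a region (this is the $\widehat{T}$-obstruction of \Cref{thm:one}), so the argument must carefully juggle the three roles of barrier, visibility, and sweeper, observing that displacing the covering endpoint cop only blinds the \emph{opposite} end's branch, which can be saved for last. The most delicate step is the claim that a branch whose sole invisible vertex is its root $w_i$ can be cleared by one sweeper against the barrier; this requires adapting the standard tree-pursuit argument so that the robber's temporary invisibility at $w_i$ yields no escape. The odd case $D=2k-3$ needs extra care: fixing $c^*=v_{k-1}$ as the guarded barrier, every branch behaves as above except the branch $L$ containing the second central vertex $v_{k-2}$, whose invisible region is a radius-one ball about $v_{k-2}$ rather than a single vertex, so $L$ must be cleared by applying the same barrier-and-sweep idea one level deeper (treating $v_{k-2}$ as a local center) — this is where I expect to spend the most effort verifying details. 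Finally, the degenerate small cases (e.g.\ $k=2$, where $\diam(T)\le 2$) are already covered by \Cref{thm:2-hyperopic-trees}, so one may assume $k\ge 3$.
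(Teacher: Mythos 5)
Your proposal is correct, but its endgame is genuinely different from the paper's. Both arguments open identically: place $c_1,c_2$ on the two ends of a diametral path and observe that the invisible set is then the radius-one ball around the center ($N[c^*]$ when $\diam(T)=2k-2$, and $N[v_{k-2}]\cup N[v_{k-1}]$ when $\diam(T)=2k-3$); your computation of this set matches the paper's. From there the paper keeps $c_1,c_2$ stationary for the whole game and lets the third cop do all the work: since the invisible subtree $T[I]$ has diameter at most $3$, it is a caterpillar, and caterpillars have $0$-visibility cop number $1$, so $c_3$ alone can clear $T[I]$, chasing the robber whenever it is visible. You instead park $c_3$ on the center as a barrier (exploiting your nice observation that a central cop can never contribute visibility anyway) and have the \emph{end} cops take turns sweeping the branches of $T-c^*$, using the fact that each branch's only invisible vertex is its root: a sweeper entering at the root either captures the robber there or traps it strictly below, where it is visible and standard one-cop tree pursuit finishes, and the barrier keeps cleared branches clean. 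What your route buys is self-containedness (no appeal to the $0$-visibility machinery behind \Cref{thm:trees-0-visibility}) and an explicit treatment of the visible/invisible interleaving via the confinement invariant, which the paper's brief ``if the robber is visible, $c_3$ moves towards it'' leaves implicit; what it costs is length and bookkeeping. One concrete detail you should pin down: in the odd case your even-case role assignment does not transfer verbatim, because branches hanging off $v_{k-1}$ other than $L$ are \emph{not} double-covered (their depth-two vertices are at distance exactly $k$ from $v_D$, hence blind to $c_2$), so in the first phase $c_1$ must stay at $v_0$ as the watcher while $c_2$ sweeps; only after the barrier is relocated from $v_{k-1}$ to $v_{k-2}$ (a safe move, since a robber confined to $L$ cannot pass $v_{k-2}$ during the relocation) do the roles swap, with $c_2$ watching from $v_D$ while $c_1$ sweeps the subtrees of $L-v_{k-2}$, whose interiors are all at distance at least $k+1$ from $v_D$. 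With that orchestration fixed, every step you flagged checks out and your strategy with three cops succeeds.
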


\begin{proof}
    Let $d = \diam(T)$ and let $x, y \in V(T)$ be such that $d(x,y) = d$.  
    
    We place $c_1$ on $x$ and $c_2$ on $y$ and they stay stationary throughout the game. This means that some parts of the tree are always visible. More precisely, the invisible vertices are $I = \{v \in V(T) \mid d(v,x) \leq k, d(v,y) \leq k \}$. Equivalently, $I = \{ v \in V(T) \mid d(v, C(T)) \leq k - \lfloor \frac{d}{2} \rfloor \}$. If $|C(T)| = 1$, then $d$ is even, and $\diam(T[I]) \leq 2 (k - \frac{d}{2}) = 2 k - d$. Otherwise, so if $|C(T)| = 2$, then $d$ is odd and $\diam(T[I]) \leq 2 (k - \frac{d-1}{2}) + 1 = 2 k - d$. 

    Since $2k-3 \leq d \leq 2k-2$, we have $\diam(T[I]) \leq 3$. Thus $T[I]$ is $K_1$, $K_2$, a star, or a star with one edge subdivided. Since the 0-visibility cop number of all these trees is 1, $c_3$ can catch the robber in the invisible part. If the robber is visible, then $c_3$ simply moves towards the robber and wins the game.
\end{proof}

\begin{theorem}
    \label{thm:upper-bound-trees-different-diameters-2}
    If $T$ is a tree with $k+1 \leq \diam(T) \leq 2k-4$, then $$\ch{k}(T) \leq 2 + \left \lfloor \frac{k}{2} - \frac{\diam(T)}{4} \right \rfloor \leq 2 + \left \lfloor \frac{k}{2} \right \rfloor - \left \lfloor  \frac{\diam(T)}{4} \right \rfloor.$$
\end{theorem}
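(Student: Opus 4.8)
The plan is to reuse the stationary two-cop setup from the proof of Theorem~\ref{thm:upper-bound-trees-different-diameters-close-to-2k}: choose $x,y \in V(T)$ with $d(x,y) = d = \diam(T)$, place $c_1$ on $x$ and $c_2$ on $y$, and keep them there for the whole game. As computed in that proof, the set of invisible vertices is $I = \{v : d(v,x) \le k \text{ and } d(v,y) \le k\}$, which is a ball around the centre $C(T)$ and hence induces a connected subtree $T[I]$ with $\diam(T[I]) \le 2k - d$; in particular distances inside $T[I]$ agree with distances in $T$. The remaining cops will then sweep $I$ using an optimal $0$-visibility strategy for the tree $T[I]$, so that the total number of cops used is $2 + c_0(T[I])$.

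The key structural point, and the reason the hypothesis $\diam(T) \ge k+1$ appears, is that it makes the extra cops invisibility-preserving. Indeed $\diam(T[I]) \le 2k - d \le 2k - (k+1) = k-1 < k$, so whenever the robber sits in $I$, every cop that is also in $I$ is at distance at most $k-1$ from him, and $c_1,c_2$ are at distance at most $k$ from him; hence as long as the robber is uncaptured and inside $I$ he stays invisible to all cops. Consequently the extra cops, confined to $T[I]$, genuinely face a $0$-visibility robber on the tree $T[I]$, and an optimal $0$-visibility strategy there applies as long as the robber remains in $I$.

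To turn this into the claimed bound I would invoke Theorem~\ref{thm:trees-diam/4-0-vis}. If $\diam(T[I]) \ge 4$ it gives $c_0(T[I]) \le \lfloor \diam(T[I])/4 \rfloor \le \lfloor (2k-d)/4 \rfloor$; if $\diam(T[I]) \le 3$ then $c_0(T[I]) = 1$, and since $d \le 2k-4$ forces $2k-d \ge 4$ we again have $c_0(T[I]) \le \lfloor (2k-d)/4 \rfloor$. Thus $\ch{k}(T) \le 2 + \lfloor (2k-d)/4 \rfloor = 2 + \lfloor k/2 - d/4 \rfloor$, which is the first displayed bound. The second inequality is the elementary floor estimate $\lfloor a - b \rfloor \le \lfloor a \rfloor - \lfloor b \rfloor$ applied with $a = k/2$ and $b = d/4$.

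The main obstacle is justifying that the sweep of $I$ and the pursuit of a visible robber can be run together, i.e.\ that the robber cannot exploit the boundary between the invisible region $I$ and the visible part $V(T)\setminus I$. The danger is that the robber dives from $I$ into a branch $B$ hanging off $I$, waits while the sweep of $T[I]$ clears the attachment vertex, and then re-enters to recontaminate cleared territory. I would resolve this exactly as in the $c_0(T[I]) = 1$ case of Theorem~\ref{thm:upper-bound-trees-different-diameters-close-to-2k}: each such branch is attached to $I$ at a single cut vertex and, being outside $I$, consists entirely of \emph{visible} vertices, so whenever the robber leaves $I$ he is seen, and a single cop suffices to pursue a visible robber on a tree. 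The delicate part is arranging the extra cops so that at every moment they either make monotone progress on the $0$-visibility sweep of $I$ or, when the robber is visible, drive him back through the unique attachment vertex (or capture him) without surrendering cleared ground; a potential/monotonicity argument on the contaminated region of $T[I]$, combined with the single attachment point of each branch, is what I expect to close this gap.
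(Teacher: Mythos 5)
Your proposal is essentially the paper's own proof. The paper likewise places two stationary cops on a diametral pair $x,y$, takes $I$ to be the invisible set with $\diam(T[I]) \le 2k - \diam(T)$, and has the remaining $\left\lfloor \frac{2k-\diam(T)}{4} \right\rfloor$ cops chase the robber when he is visible and otherwise run the optimal strategy on $T[I]$ guaranteed by Corollary~\ref{cor:trees-diam/4-hyperopic}; since $\diam(T[I]) \le k-1$, that strategy is exactly the $0$-visibility strategy you invoke via Theorem~\ref{thm:trees-diam/4-0-vis} and Observation~\ref{obs:0-vis}, so the two invocations coincide. The only cosmetic differences: where you dispose of the case $\diam(T[I]) \le 3$ by noting $c_0(T[I]) = 1$, the paper instead enlarges $I$ so that $\diam(T[I]) = 2k - \diam(T) \ge 4$ holds exactly; and your explicit check that a robber inside $I$ is invisible even to cops standing inside $I$ (because $\diam(T[I]) < k$) is left implicit in the paper.

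The one place you go beyond the paper is your last paragraph. The paper's proof ends with the single sentence describing the chase/sweep dichotomy and never addresses the boundary interaction you worry about, so by the paper's own standard your write-up is already complete. Your instinct that this point deserves an argument is sound, but the clean way to close it is not a potential/monotonicity argument; it is a shadow argument of the kind the paper itself uses in Theorem~\ref{thm:retract} and in Section~\ref{sec:outerplanar}. Let the extra cops ignore visibility altogether and run the $0$-visibility strategy on $T[I]$ against the projection of the robber onto $I$, where each component $B$ of $T - I$ projects to its unique attachment vertex in $I$. This projection is a legal (possibly lazy) robber trajectory on $T[I]$, so some cop eventually occupies its vertex. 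At that moment either the robber himself is caught, or he sits in a branch $B$ while a cop stands on the attachment vertex of $B$; then the robber is visible, is permanently separated from $I$ by that cop, and loses the ensuing full-visibility chase on the subtree $B$ to a single cop. Since the sweep never reacts to the robber's excursions at all, recontamination of cleared territory simply cannot occur, and no monotonicity bookkeeping is needed.
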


\begin{proof}
    Let $d = \diam(T)$ and let $x, y \in V(T)$ be such that $d(x,y) = d$. As in the proof of Theorem \ref{thm:upper-bound-trees-different-diameters-close-to-2k}, we place $c_1$ on $x$, $c_2$ on $y$, define the set of invisible vertices $I$, and observe that $\diam(T[I]) \leq 2 k - d$. If $\diam(T[I]) < 2k-d$, we add some vertices to $I$ to ensure $\diam(T[I]) = 2k-d$ (this is possible since $d \geq k+1$). Observe that $2k-d \geq 4$ as $d \leq 2k-4$. 

    The strategy of $2 + \lfloor \frac{2k-d}{4} \rfloor$ cops is the following. Two cops ($c_1$, $c_2$) are positioned on $x$, $y$ throughout the game, respectively. The remaining $\lfloor \frac{2k-d}{2} \rfloor \geq 2$ cops either move towards the robber (if the robber is visible), or play their optimal strategy for the $k$-hyperopic game on $T[I]$ (if the robber is invisible) which exists by Corollary \ref{cor:trees-diam/4-hyperopic}.
\end{proof}

Combining the above results we see that for a given fixed $k$, if diameter of a tree is at most $k$, then we are in the 0-visibility case. This results in the $k$-hyperopic cop number being arbitrarily large. On the other hand, if diameter is at least $2k-3$ then the upper bound is 2 or 3. It is unclear how the $k$-hyperopic cop number behaves on trees with diameter between $k+1$ and $2k-4$. The upper bound from Theorem \ref{thm:upper-bound-trees-different-diameters-2} bounds the $k$-hyperopic cop number in terms of $k$ but, as diameter gets closer to $k+1$, the bound becomes larger and larger. The below example shows however that there are trees with diameter $k+1$ that have small $k$-hyperopic cop number as well.

\begin{example}
We provide an example of a tree $T$ with $\diam(T) = k+1$ such that $\ch{k}(T)=3$. Let $k = 9$. The tree $T$ is obtained from $\widehat{T}$ by attaching a $K_{1,3}$ where each edge is subdivided twice to each leaf of $\widehat{T}$. See Figure \ref{fig:example-tree}. Let $\mathcal{L}$ denote the set of all leaves of the obtained tree $T$. Clearly, $\diam(T) = 10 = 9+1$. More precisely, the only vertices at distance 10 are some pairs of leaves. Thus, the 9-hyperopic cops and robber game on the $T - \mathcal{L}$ is equivalent to the 0-visibility game on $T - \mathcal{L}$. As $T - \mathcal{L} \in \T_3$, we have $c_0(T - \mathcal{L}) = 3$. As additional visibility of leaves $\mathcal{L}$ only helps the cops to catch the robber, $c_{H,9}(T)\leq3$. But as $V(T) - \mathcal{L}$ always remains invisible and the robber can escape two cops there, the same strategy ensures that two cops cannot catch the robber on $T$. Hence, $\ch{9}(T) = 3$. On the other hand, Theorem \ref{thm:upper-bound-trees-different-diameters-2} gives only the bound $\ch{9}(T) \leq 4$. 

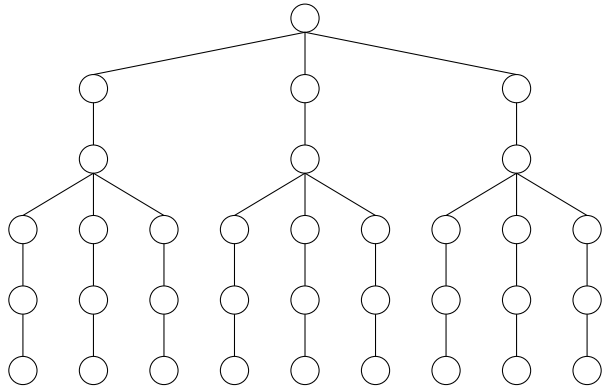
\begin{figure}[!ht]
\centering
\begin{tikzpicture} [scale=.75]
\draw  (7.5,11.5) circle (0.25cm);
\draw  (7.5,10.25) circle (0.25cm);
\draw  (7.5,9) circle (0.25cm);
\draw  (7.5,7.75) circle (0.25cm);
\draw  (7.5,6.5) circle (0.25cm);
\draw  (7.5,5.25) circle (0.25cm);
\draw (7.5,11.25) -- (7.5,10.5);
\draw  (7.5,10) -- (7.5,9.25);
\draw (7.5,8.75) -- (7.5,8);
\draw  (7.5,7.5) -- (7.5,6.75);
\draw  (6.25,7.75) circle (0.25cm);
\draw  (6.25,6.5) circle (0.25cm);
\draw  (6.25,5.25) circle (0.25cm);
\draw  (8.75,7.75) circle (0.25cm);
\draw  (8.75,6.5) circle (0.25cm);
\draw  (8.75,5.25) circle (0.25cm);
\draw  (2.5,7.75) circle (0.25cm);
\draw  (3.75,7.75) circle (0.25cm);
\draw  (5,7.75) circle (0.25cm);
\draw  (2.5,6.5) circle (0.25cm);
\draw  (3.75,6.5) circle (0.25cm);
\draw  (5,6.5) circle (0.25cm);
\draw  (2.5,5.25) circle (0.25cm);
\draw  (3.75,5.25) circle (0.25cm);
\draw  (5,5.25) circle (0.25cm);
\draw  (10,7.75) circle (0.25cm);
\draw  (11.25,7.75) circle (0.25cm);
\draw  (12.5,7.75) circle (0.25cm);
\draw  (10,6.5) circle (0.25cm);
\draw  (11.25,6.5) circle (0.25cm);
\draw  (12.5,6.5) circle (0.25cm);
\draw  (10,5.25) circle (0.25cm);
\draw  (11.25,5.25) circle (0.25cm);
\draw  (12.5,5.25) circle (0.25cm);
\draw  (3.75,10.25) circle (0.25cm);
\draw  (3.75,9) circle (0.25cm);
\draw  (11.25,10.25) circle (0.25cm);
\draw  (11.25,9) circle (0.25cm);
\draw (7.5,6.25) -- (7.5,5.5);
\draw  (8.75,6.25) -- (8.75,5.5);
\draw  (8.75,7.5) -- (8.75,6.75);
\draw  (10,7.5) -- (10,6.75);
\draw  (10,6.25) -- (10,5.5);
\draw  (11.25,6.25) -- (11.25,5.5);
\draw  (12.5,6.25) -- (12.5,5.5);
\draw  (12.5,7.5) -- (12.5,6.75);
\draw  (11.25,7.5) -- (11.25,6.75);
\draw  (11.25,8.75) -- (11.25,8);
\draw  (11.25,10) -- (11.25,9.25);
\draw  (6.25,7.5) -- (6.25,6.75);
\draw  (6.25,6.25) -- (6.25,5.5);
\draw  (5,6.25) -- (5,5.5);
\draw  (5,7.5) -- (5,6.75);
\draw  (3.75,7.5) -- (3.75,6.75);
\draw  (3.75,6.25) -- (3.75,5.5);
\draw  (2.5,6.25) -- (2.5,5.5);
\draw  (2.5,7.5) -- (2.5,6.75);
\draw  (2.5,8) -- (3.75,8.75);
\draw  (3.75,8.75) -- (3.75,8);
\draw  (3.75,8.75) -- (5,8);
\draw  (3.75,10) -- (3.75,9.25);
\draw  (7.5,8.75) -- (6.25,8);
\draw  (7.5,8.75) -- (8.75,8);
\draw  (11.25,8.75) -- (10,8);
\draw  (11.25,8.75) -- (12.5,8);
\draw  (3.75,10.5) -- (7.5,11.25);
\draw  (7.5,11.25) -- (11.25,10.5);
\end{tikzpicture}
\caption{A tree with diameter 10 and 9-hyperopic cop number equal to 3.} 
\label{fig:example-tree}
    \end{figure}
\end{example}

\section{Outerplanar}
\label{sec:outerplanar}

In \cite{bonato-2019-hyperopic} they prove that the hyperopic cop number of outerplanar graphs is at most 2, and the hyperopic cop number of planar graphs is at most 3, thus matching the bounds holding for the usual cop number. In this section we prove that the 2-hyperopic cop number of outerplanar graphs is at most 2, but are unable to provide a satisfactory upper bound for the $k$-hyperopic cop number of outerplanar graphs if $k \geq 3$. The idea of the proof for the 2-hyperopic case was inspired by \cite[Theorem 4.1.1]{clarke-2002-phd}, but additional methods were needed to deal with the limited visibility of cops, and the proof is significantly different than in the 1-hyperopic case.

\begin{theorem}
    \label{thm:outerplanar-2}
    If $G$ is an outerplanar graph,  then $\ch{2}(G) \leq 2.$
\end{theorem}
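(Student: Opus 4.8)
The plan is to exploit the structure of outerplanar graphs together with the strong visibility available when $k=2$: a robber is invisible only if he is at distance exactly $1$ or $2$ from \emph{every} cop. I would first recall the standard fact, used in Clarke's thesis and the $1$-hyperopic case, that an outerplanar graph admits a Hamiltonian-style outer cycle on its $2$-connected blocks, and that the bounded faces give ``chords'' partitioning the interior. The guiding idea is the classical two-cop strategy for outerplanar graphs: the cops maintain control of a shrinking region bounded by at most two shortest paths (or by a single chord together with a cop-guarded arc of the outer cycle), reducing the robber's territory at every round. The novelty here is that the cops cannot always see the robber, so I would need to argue that whenever the robber is invisible, the geometry forces him into a small, controlled neighborhood of the two cops, and that control of the bounding structure can still be maintained blindly during those rounds.

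Concretely, I would proceed block by block, since it suffices to handle a $2$-connected outerplanar graph (the general case follows by the usual argument that a robber confined to one block can be cornered, using connectivity and Theorem~\ref{thm:retract} or a cut-vertex guarding argument). In a $2$-connected outerplanar graph the outer boundary is a cycle $C$, and every bounded region is cut off by a chord. The first key step is to show that two cops can \emph{guard a chord}: placing cops so that they occupy or dominate the two endpoints of a chord $uv$ confines the robber to one side, because any robber-path crossing the chord must pass through $u$ or $v$. The second step is the shrinking argument: given that the robber is trapped in a region $R$ bounded by a chord and an arc of $C$, the cops advance the chord inward along $R$, each round replacing the current chord by one that cuts off a strictly smaller region, until $R$ becomes trivial (an edge or a single triangle) where the robber is caught.

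The \textbf{main obstacle}, and the place where the $k=2$ hyperopic setting genuinely differs from the $1$-hyperopic and full-visibility cases, is handling the rounds in which the robber is invisible. I would handle this by the following observation, which I expect to be the technical heart of the proof: if both cops sit on the endpoints $u,v$ of the advancing chord and the robber is invisible, then $d(r,u)\le 2$ and $d(r,v)\le 2$, so the robber lies in a bounded-diameter pocket adjacent to the chord. In an outerplanar graph such a pocket has very limited structure (essentially a fan of triangles on $uv$), so even blind, the cops can sweep it: one cop holds an endpoint while the other probes the at-most-distance-$2$ neighborhood, forcing the robber either to be caught or to reveal himself by stepping outside the invisible zone, which returns the game to the visible-shrinking regime. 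The delicate bookkeeping is to verify that this blind sweep never lets the robber slip back across the guarded chord into already-cleared territory; I would prove this by maintaining, as a loop invariant, that at the end of every round the robber is strictly inside the current region and the region's size is nonincreasing, with a strict decrease at least once per cycle of the cops' routine, so that finiteness of $G$ forces a capture.
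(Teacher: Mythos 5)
Your high-level plan (block decomposition plus a guarded-chord shrinking argument on $2$-connected blocks) has the same skeleton as the paper's proof, which is likewise modeled on Clarke's outerplanar strategy; but the step you yourself flag as ``the technical heart'' is exactly where all the work lies, and your proposed resolution fails as stated. First, the structural claim is false: when the cops sit on the endpoints $u,v$ of a chord, the invisible set is $\{r : 1 \le d(r,u)\le 2,\ 1\le d(r,v)\le 2\}$, and this is \emph{not} essentially a fan of triangles on $uv$. For example, every vertex of an arbitrarily long fan hanging on $u$ alone lies in it (each such vertex is at distance $2$ from $v$ through $u$); the set may also contain vertices at distance exactly $2$ from both endpoints and may have parts on both sides of the chord. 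Second, and more seriously, ``one cop holds an endpoint while the other probes'' vacates the second endpoint: crossing the chord is prevented only by vertex occupation, and an omniscient invisible robber can step onto the vacated endpoint $v$ (he stays invisible there, being at distance $1$ from the cop at $u$) and slip into the cleared side on his next move; the cop who left $v$ cannot react, since he never sees this happen. The paper's proof is organized precisely around this difficulty: it keeps both crossing points guarded at all times by having a single cop \emph{alternate} between two adjacent vertices (which catches, even blind, any robber who steps on either of them), it stations the cops on branch vertices of the outer cycle and defines the cops' territory as a chord-free arc, and it selects probe targets through an extended case analysis so that each probe simultaneously preserves the guard and yields enough visibility to determine the robber's side before the territory is advanced. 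None of that mechanism is supplied by your sketch, and without it the loop invariant you want to maintain is simply not maintained.

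The reduction to $2$-connected blocks is also not ``the usual argument.'' Theorem~\ref{thm:retract} points the wrong way for this purpose: it bounds the $k$-hyperopic cop number of a retract from above by that of the host graph, so it cannot convert bounds on blocks into a bound on $G$. The paper instead runs a shadow strategy on a block, and after the cops catch the robber's \emph{shadow} at a cut vertex $x$ they must still identify which component of $G-x$ contains the robber; with hyperopic cops this requires moving a cop to a vertex of the block at distance at least $2$ from $x$ so that the relevant components become visible, and such a vertex need not exist — the paper needs a separate argument for the case in which every block is a path plus a universal vertex. Your proposal does not address either complication, so both halves of the argument have genuine gaps.
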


\begin{proof}
    First suppose that $G$ is 2-connected. Then as in \cite{clarke-2002-phd} we can assume that $V(G) = \{v_0, \ldots, v_{n-1}\}$ and $v_i v_{i+1} \in E(G)$ for all $i \in [n]$ (subscripts taken modulo $n$). We fix an outerplanar embedding of $G$ such that $v_0 v_1 \ldots v_{n-1}$ form an outer cycle in the clockwise direction.

    If there are no chords in the embedding, then $G = C_n$ and by [add reference] $\ch{2}(G) = 2$. Let $x_0, x_1, \ldots, x_k$ be the vertices of degree at least 3 in clockwise direction around the cycle. Note that vertices on the cycle between $x_i$ and $x_{i+1}$ are of degree 2 so there is a unique path on the cycle between $x_i$ and $x_{i+1}$. 

    For $x_i, x_j$, $i<j$, let $C_{i,j}$ denote all vertices on the outer cycle between $x_i$ and $x_j$ in the clockwise direction (excluding $x_i$ and $x_j$). If two cops are on $x_i$ and $x_j$, robber is not on $C_{i,j}$ and there is no chord between $C_{i,j}$ and $C_{j,i}$, then the two cops are guarding $C_{i,j} \cup \{x_i, x_j\}$. We call such $C_{i,j}$ the cops' territory, and say that $|C_{i,j}|$ is its size (even though two additional vertices, $x_i$ and $x_j$, are guarded by it).

    We will prove that the cops can establish a cops' territory and then keep making it larger, thus eventually winning the game (since $G$ is finite).

    We start by proving that cops can establish a cops' territory of size at least two starting from some vertex $v \in V(G)$. Without loss of generality let $v \in C_{0,1} \cup \{x_0\}$, and if $\deg(v) \geq 3$, then $C_{0,1}$ is the larger between the two paths of degree 2 vertices incident with $v$. We distinguish between the following cases.
    
    \begin{description}
        \item[Case A:] $|C_{0,1}| \geq 2$.\\
        In this case, $c_1$ moves from $v$ to $x_0$ (possibly staying on the same vertex if $v = x_0$) while $c_2$ moves from $v$ to $x_1$. After that, $C_{0,1}$ is the cops' territory since by definition there is no chord between $C_{0,1}$ and $C_{1,0}$. In this case, the obtained cops territory contains at least two vertices, and $v$ is within it.

        \item[Case B:] $|C_{0,1}| = 1$.\\
        Let $w$ be the neighbor of $x_0$ in the counterclockwise direction on the outer cycle (possibly $w = x_k$), and let $x_m \in C_{1,0}$ be a neighbor of $x_1$ that is closest to $x_0$ in the clockwise direction. Since $x_k$ and $x_0$ are both of degree at least 3, $|C_{m,0}| \geq 2$. Cops first move $c_1$ to $x_0$ and $c_2$ to $x_1$. 
        
        If $w x_m \notin E(G)$, then at the same time, $c_1$ moves to $w$ and back while $c_2$ moves to $x_m$ and back. Due to $c_2$, during these rounds, robber cannot move to $x_1$ or $x_m$ and also cannot switch between $C_{1,m}$ and $C_{m,1}$. But due to $c_1$, $C_{1,m}$ is visible. If robber is on $C_{1,m}$, then $c_1$ moves from $x_0$ to $x_m$ and $C_{m,1}$ is cops' territory, containing $v$, and being of size at least 2. Otherwise, robber has to be on $C_{m,0}$, so $c_2$ moves to $x_m$ which again establishes cops' territory containing $v$ and being of size at least two.

        If $w x_m \in E(G)$, then this implies that $w = x_k$ and $x_0 x_m \in E(G)$. Let $C_{0,1} = \{y\}$ and note that $v \in \{x_0, y\}$. Cops now do the following: simultaneously, $c_1$ alternates between $x_0$ and $y$ while $c_2$ alternates between $x_m$ and $x_k$. Thus robber cannot be located on $\{x_0, y, x_k, x_m\}$ and can also not move between $C_{k,m}-\{x_0,y\}$ and $C_{m,k}$. Thus while $c_2$ keeps alternating between $x_k$ and $x_m$ (to prevent robber from moving to a different part of the graph), $c_1$ settles on $y$. Due to $c_1$ all vertices of $C_{m,k}$ are visible. If robber is there, $c_1$ moves to $x_k$ and then $c_2$ settles on $x_m$, thus cops establish cops' territory $C_{k,m}$ of size at least 2, containing $v$. If robber is not visible, robber can only be located on $C_{0,m} - \{y\}$, thus $c_1$ moves to $x_1$, and then $c_2$ settles on $x_m$. Again, cops establish cops' territory $C_{m,1}$ which is of size at least 2 and contains $v$.

        \item[Case C:] $|C_{0,1}| = 0$.\\
        By assumption this implies that $v = x_0$ and $|C_{k,0}| = 0$. Let $x_m \in C_{1,0}$ be a neighbor of $x_1$ that is closest to $x_0$ in the clockwise direction. Since $\deg(x_0), \deg(x_k) \geq 3$, we have $|C_{m,k}| \geq 1$. Cops both start on $x_0=v$, $c_1$ stays there, and $c_2$ moves to $x_1$ in the next round.

        \begin{description}
            \item[Case $C_1$:] $x_k x_m \notin E(G)$\\
            In this case, at the same time, $c_1$ moves from $x_0$ to $x_k$ and back again, while $c_2$ moves from $x_1$ to $x_m$ and back. Note that $x_k x_1 \notin E(G)$ by definition of $x_m$, and $x_k x_m \notin E(G)$ by assumption. This together with the fact that $|C_{m,k}| \geq 1$ implies that while $c_1$ is on $x_k$, the whole $C_{1,k}$ is visible, and due to the movement of $c_2$, the robber cannot be on $x_1, x_m$, and also cannot move between $C_{1,m}$ and $C_{m,1}$. If robber is visible in $C_{1,m}$, then $c_1$ moves to $x_1$ while $c_2$ moves to $x_m$ and $C_{m,1}$ is cops' territory which contains at least two vertices, including $v$. If the robber is invisible, then $C_{0,m}$ is cops' territory, again containing at least two vertices, including $v$.

            \item[Case $C_2$:]$x_k x_m \in E(G)$\\
            In this case, as $\deg(x_0) \geq 3$, we must have $x_0 x_m \in E(G)$. If for every $y \in C_{1,m}$, $y x_m \in E(G)$, then as a consequence, vertices in $C_{1,m}$ are adjacent only to their immediate neighbors on the outer cycle, and $x_m$. The strategy of cops is for $c_1$ to alternate between $x_0$ and $x_m$ (thus preventing the robber to move between $C_{0,m}$ and $C_{m,0}$), while $c_2$ moves to $x_m$ and then counter clockwise from $x_m$ to $x_1$ along the outer cycle. When $c_2$ reaches $x_1$, if the robber was not caught, we know that robber is now on $C_{m,0}$, thus cops have established cops' territory $C_{0,m}$ which has the desired properties. Otherwise, let $u \in C_{1,m}$ be such that $u x_m \notin E(G)$. Now $c_1$ moves from $x_0$ to $x_m$ and back, while $c_2$ moves from $x_1$ to $u$ and back. Due to $c_2$, $C_{m,0}$ is visible, while $c_1$ is preventing the robber from moving between $C_{0,m}$ and $C_{m,0}$. If the robber is visible, then $c_1$ stays on $x_0$ and $c_2$ moves to $x_m$, establishing the desired cops' territory $C_{0,m}$. If the robber is invisible, $c_1$ moves to $x_m$ while $c_2$ stays on $x_1$, again establishing the desired cops' territory $C_{m,1}$.
        \end{description}
    \end{description}

    Now assume that $C_{i,j}$, $i<j$, is cops' territory, $c_1$ is on $x_i$ and $c_2$ is on $x_j$. By above, we can assume that $|C_{i,j}| \geq 2$. We distinguish between the following cases.

    \begin{description}
        \item[Case 1:] There is no chord between $\{x_i, x_j\}$ and $C_{j,i}$.\\
        Cop $c_2$ moves from $x_j$ to $x_{j+1}$; by the assumption and the condition in this case, robber can never enter the part of the outer cycle between $c_1$ and $c_2$. By the end of this sequence of cops' moves, they have enlarged the cops' territory.

        \item[Case 2:] Otherwise; without loss of generality there is a chord between $x_j$ and $C_{j,i}$, and let $x_m \in C_{j,i}$ be the neighbor of $x_j$ closest to $x_i$.\\
        Let $t = |C_{m,i}|$ be the number of vertices between $x_i$ and $x_m$ on the outer cycle in the counterclockwise direction. If $t=0$, then $c_1$ moves to $x_m$ which enlarges the cops' territory. If $t=1$, then by the assumptions, the vertex between $x_i$ and $x_m$ is of degree 2. So if cop $c_1$ moves from $x_i$ to $x_m$ (in two consecutive rounds), then robber cannot enter the part of the outer cycle between $c_1$ and $c_2$, and cops' have again enlarged the cops' territory.

        If $t \geq 2$, we distinguish between two cases. 
        \begin{description}
            \item[Case 2a:] There is a vertex $v \in C_{i,j} \cap N(x_j)$ such that $d(v, x_i) \geq 2$.\\
            Cop $c_2$ moves to $v$ (and then immediately back to $x_j$ in the next round, thus keeping $C_{i,j}$ as cops' territory). At the same time, $c_1$ does the following. If $x_i x_m \in E(G)$, then $x_1$ moves to $x_m$ and back. Otherwise, $x_i$ moves to a neighbor along the outercycle in $C_{i,j}$. Due to $c_1$, $x_m$ is visible during that round, and due to $c_2$ all vertices of $C_{m,i}$ are visible. 

            If robber is in $C_{m,i}$, then $c_2$ moves to $x_m$ which makes cops' territory larger.
            If robber is not in $C_{m,i}$, then in the next rounds, $c_2$ is alternating between $x_j$ and $x_m$ (to prevent robber from entering $C_{i,j}$ and $C_{m,i}$), while $c_1$ moves to $x_m$. Once $c_1$ is on $x_m$, $c_2$ stops alternating and settles on $x_j$, which enlarges cops' territory.

            \item[Case 2b:] Otherwise.\\
            If by exchanging the role of $x_i$ and $x_j$ condition of Case 2a is satisfied, then we use the strategy from Case 2a. 
            Otherwise, $|C_{i,j}| \leq 1$, which is not possible.
        \end{description}
    \end{description}

    In the rest of the proof we consider $G$ which is not 2-connected. Let $\mathcal{B} = \{G_1, \ldots, G_k\}$ be the set of maximal induced 2-connected subgraphs of $G$. Note that each $G_i$ contains at least one cut vertex of $G$, and that each $G_i$ has at least two vertices. 

    Let $\sigma(G_i)$ denote the shadow of $G$ onto $G_i$ obtained by mapping each vertex of $G_i$ to itself and for every cut vertex $x \in V(G_i)$ of $G$ mapping every vertex that is disconnected from $G_i$ in $G-x$ to $x$. Clearly, $\sigma(G_i)$ is a 2-connected outerplanar graph. Notice that if the robber plays on $G$ and cops play on some $G_i$ (using $\sigma(G_i)$ to map robbers position), then if the cops win on $G_i$, they either catch the robber or they catch the shadow of the robber on a cut vertex $x \in V(G_i)$, which means that robber is positioned on the part of $G$ that becomes disconnected from $G_i$ in $G-x$.

    If there is an $i \in [k]$ such that for every cut vertex $x \in V(G_i)$ of $G$ there exists $v \in V(G_i)$ such that $d(x, v) \geq 2$, then cops start the game by playing on $G_i$, using above strategy for 2-connected outerplanar graphs and the shadow of the robber $\sigma(G_i)$. In this way they either catch the robber or robber's shadow. In the latter case, robber's shadow is on a cut vertex $x \in V(G_i)$ of $G$. One of the cops moves to the vertex $v \in V(G_i)$ that is at distance at least 2 from $x$, which ensures that in the part of the graph that contains the robber (the part disconnected from $G_i$ in $G-x$) every vertex is visible, while the other cops stays on $x$. Thus cops now know in which component of $G-x$ robber is, and they can continue playing there by using the combination of the shadow strategy and the strategy on a 2-connected outerplanar graph. Notice that the strategy of establishing the initial cops' territory in the 2-connected case implies that $x$ will be within this initial cops' strategy.

    Otherwise, so if for every $i \in [k]$, $G_i$ contains a cut vertex of $G$ which is adjacent to all other vertices of $V(G_i)$, then every block $G_i$ is isomorphic to a path with an additional vertex that is adjacent to all vertices of the path. Observe that 2-hyperopic number of such graphs is 2 (one cop is on the universal vertex, the other moves along the whole path). 
    
    Let $G_1$ be a block of $G$ with only one cut vertex (it exists due to the definition of blocks) and let $G_2$ be the incident block. Since $G_1$ has a cut vertex that is adjacent to all other vertices of $V(G_1)$, say $x$, $c_1$ can be on $x$ while $c_2$ visits all other vertices in $G_1$. Thus the two cops catch the robber or robber's shadow on $G_1$. If the game is not over yet, robber's shadow is on $x$. Now $c_1$ alternates between $x$ and $y$, where $y$ is the universal vertex of $G_2$, while $c_2$ visits all other vertices of $G_2$ along the outer cycle, until robber or robber's shadow is caught, say on vertex $z$. Now cops' territory contains $G_1 \cup G_2$ and that $z \neq x$. Thus there is a vertex $v \in V(G_1) \cup V(G_2)$ such that $d(z,v) \geq 2$. Thus one cop can move to $v$ while the other two wait on $z$ until the vertices that are disconnected from $G_2 \cup G_1$ in $G-z$ are all visible, and then start the strategy on an outerplanar graph that ensures a win of two cops as above.
\end{proof}

If $k \geq 3$, we believe that the $k$-hyperopic cop number can still be bounded from above independently of the number of vertices of the graph, but are unable to prove it. Instead, we provide the following upper bound, which is better that the bound in Corollary \ref{cor:upper-ch} if the matching number of the graph is not too small.

\begin{proposition}
    \label{prop:outerplanar-2con-k}
    If $G$ is a 2-connected outerplanar graph on $n \geq 5$ vertices, then $\ch{k}(G) \leq \sqrt{2n}.$
\end{proposition}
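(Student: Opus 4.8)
The plan is to catch the robber by first immobilizing it inside a small region and then sweeping that region blindly, balancing the cost of the two tasks. Write the outer cycle as $v_0 v_1 \cdots v_{n-1}$ with non-crossing chords, exactly as in the $2$-connected part of the proof of Theorem~\ref{thm:outerplanar-2}. Set $t = \lceil \sqrt{2n}\,\rceil$ and split the cops into a set of $s = \lfloor t/2\rfloor$ \emph{sentinels} and $t-s = \lceil t/2\rceil$ \emph{sweepers}. The sentinels occupy a fixed vertex set $W$ for the entire game; since the robber can never step onto a vertex of $W$ without being captured, it stays forever inside a single connected component of $G - W$. The sweepers then clean the components of $G - W$ one at a time: while they work on one component, the robber (if it sits in another) cannot move between components, so a blind $0$-visibility sweep of whichever component currently holds the robber eventually captures it. A component with $\ell$ vertices is an induced, hence outerplanar, subgraph, so Theorem~\ref{thm:upper-c0} lets $\lceil \ell/2\rceil$ sweepers clean it, and the sweepers are reusable across components.

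For this to fit inside $t$ cops I would need $W$ to satisfy two properties: $|W| \le s \approx \sqrt{n/2}$, and every component of $G - W$ has at most $2(t-s) \approx \sqrt{2n}$ vertices, so that $\lceil \ell/2\rceil \le t-s$. The count then works out exactly: if a set of $s$ vertices leaves components of order at most $\lceil n/s\rceil$, the requirement $\lceil n/s\rceil \le 2(t-s)$ is the balancing inequality $n \le 2\,s\,(t-s)$, which for $s = \lfloor t/2\rfloor$ becomes $n \le t^2/2$, i.e.\ $t \ge \sqrt{2n}$. Thus the whole argument reduces to a separator statement for $2$-connected outerplanar graphs: for every $a$ there is a vertex set $W$ with $|W|\le a$ whose removal leaves components each of order at most $\lceil n/a\rceil$ — that is, a balanced separator theorem with leading constant~$1$.

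The key steps, in order, would be: (i) prove the separator lemma; (ii) verify confinement, i.e.\ that stationary sentinels on $W$ genuinely trap the robber in one component of $G-W$ (immediate, since any walk leaving a component passes through $W$); (iii) verify the reusable blind sweep, invoking Theorem~\ref{thm:upper-c0} on each component and using that the sentinels keep the robber put while the sweepers relocate; and (iv) assemble the bound $t = \lceil\sqrt{2n}\,\rceil$ from $n \le 2\,s\,(t-s)$. The few small cases (where $\sqrt{2n}$ is already comparable to the matching bound of Corollary~\ref{cor:upper-ch}) and the chordless case $G = C_n$ are handled directly, the latter by Proposition~\ref{prop:graph-classes}.

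The main obstacle is step~(i), and specifically getting the leading constant down to $1$ rather than $2$. Separating along a single chord $v_a v_b$ costs two sentinels, since both endpoints must be guarded, so a naive greedy cut of the weak-dual face tree only produces components of order $\le n/a$ using $2a$ guards, which yields the weaker bound $2\sqrt{n} > \sqrt{2n}$. To recover the factor $\sqrt 2$ I would exploit that the chords form a non-crossing, laminar family: a carefully placed sentinel vertex can simultaneously terminate a cycle-arc and block all chords that would otherwise reconnect that arc to the rest of the graph, so that high-degree hubs and nested chords are paid for only once. Making this sharing precise — choosing the $\lfloor t/2\rfloor$ sentinels so that they both chop the outer cycle into arcs of length at most $2\lceil t/2\rceil$ and meet every cross-arc chord — is the crux; once it is in place, steps (ii)--(iv) are routine.
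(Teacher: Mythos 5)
Your reduction is cleanly organized, and steps (ii)--(iv) are indeed routine, but the argument stands or falls with step (i), which you leave unproven --- and unfortunately it is not merely hard, it is false. Consider the ``nested-chords'' graph: the outer cycle $v_0 v_1 \cdots v_{n-1}$ together with the chords $v_i v_{n-i}$ for $1 \leq i \leq \frac{n}{2}-1$. The chords are pairwise nested, hence non-crossing, so this is a $2$-connected outerplanar graph, and it is essentially a ladder: the two halves of the outer cycle are the side rails and the chords are the rungs. To disconnect a ladder between two columns one must remove a vertex from the top rail \emph{and} a vertex from the bottom rail, and one checks that two such cuts can share a removed vertex only when the component they enclose between them spans a single column (at most two vertices). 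Consequently, any $W$ whose removal leaves only components of order at most $\ell$ satisfies $|W| \geq 2(n-|W|)/(\ell+2)$, i.e.\ with $|W|=a$ sentinels the best guarantee is components of order roughly $2n/a$, not $\lceil n/a \rceil$. The sharing you hope for (``one sentinel terminates an arc and blocks all reconnecting chords'') cannot occur here, because every column offers two vertex-disjoint routes past any single sentinel. Plugging the correct constant $2$ into your own accounting gives a cop count of at least $s + n/s \geq 2\sqrt{n}$, and $2\sqrt{n} > \sqrt{2n}$ for every $n$, so \emph{no} split into static sentinels and sweepers can reach the claimed bound: the obstruction is structural, not a matter of optimizing the separator.

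It is instructive to compare with the paper's proof of Proposition \ref{prop:outerplanar-2con-k}, which gets the factor $\sqrt{2}$ from recursion rather than from a separator. There, two cops grow a cops' territory exactly as in Theorem \ref{thm:outerplanar-2}; when a chord blocks further progress, the two guards confine the robber to the smaller of the two arcs cut off by that chord, which has at most $(n-5)/2$ vertices, and the proposition is invoked inductively on that arc with $\sqrt{2 \cdot \frac{n-5}{2}} = \sqrt{n-5}$ additional cops; the inequality $2 + \sqrt{n-5} \leq \sqrt{2n}$ closes the induction. In other words, the guards are reused adaptively from stage to stage, and at any moment only one half-sized piece must be handled blindly --- precisely the resource reuse that your ladder example shows a static allocation cannot achieve. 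If you let your sentinels redeploy after each swept component so as to recover this reuse, you will find yourself rebuilding the paper's territory-growing recursion.
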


\begin{proof}
    First observe that there are only three nonisomorphic 2-connected outerplanar graphs on 5 vertices and that two ($2 \leq \sqrt{10}$) cops can win even the 0-visibility game on all of them. 
    
    Using the same notation as in the proof of Theorem \ref{thm:outerplanar-2}, relabel the vertices such that $C_{0,1}$ is the largest among all $C_{i, i+1}$. Cops start the game by $c_1$ staying on $x_0$ while $c_2$ moves from $x_0$ to $x_1$ along the outer cycle. 

    Now assume that $C_{i,j}$, $i<j$ is cops' territory, $c_1$ is on $x_i$ and $c_2$ is on $x_j$. By above and since every 2-connected outerplanar graph contains a vertex of degree 2, $|C_{i,j}| \geq 1$. If there is no chord between $x_i$ or $x_j$ and $C_{j,i}$, then as in Case 1 in the proof of Theorem \ref{thm:outerplanar-2}, cops can enlarge the cops' territory. Otherwise, let $x_m \in C_{j,i}$ be the neighbor of $x_j$ closest to $x_i$ along the outer cycle, and let $x_n \in C_{j,i}$ be the neighbor of $x_i$ closest to $x_j$ along the outer cycle. Without loss of generality, $|C_{n,i}| \leq |C_{j,m}|$. Thus, $|C_{n,i}| \leq \frac{n-3}{2} - 1 = \frac{n-5}{2}$. Now, $c_1$ alternates between $x_i$ and $x_n$, $c_2$ stays on $x_j$, and $\sqrt{2\frac{n-5}{2}}$ other cops play the $k$-hyperopic cops and robber game on $G[C_{n,i}]$. If robber is not caught, robber can only be on $C_{j,n}$, thus by moving $c_1$ to $x_n$, cops' enlarge their territory. As $2 + \sqrt{2\frac{n-5}{2}} \leq \sqrt{2n}$ for $n \geq 5$, we are done.
\end{proof}

\section{Future Work}
\label{sec:future}

To conclude the paper we gather several possible directions of further research of the $k$-hyperopic cops and robber game in this section.

As already mentioned in Section \ref{sec:trees}, the $k$-hyperopic cop number of trees is nontrivial for $k \geq 3$. It would be interesting to understand the relation between the $k$-hyperopic cop number and the given $k$ and the diameter of the tree.

As seen in Section \ref{sec:general-upper}, several of the equality cases from \cite{bonato-2019-hyperopic} serve as equality cases for the upper bound for $c_{H,k}$. It would be interesting to know if there are more equality cases. More precisely, we ask the following.

\begin{question}
    Can the graphs $G$ with $\ch{k}(G) = \left \lceil \frac{|V(G)|}{2} \right \rceil$ be characterized? If not, can anything be said about how many edges such $G$ can be missing from being a complete graph?
\end{question}

For outerplanar graphs there is still much to be discovered. One thing to note is that when $k \geq 3$ studying outerplanar graphs becomes much more complicated. As already mentioned, we wonder how Propositon \ref{prop:outerplanar-2con-k} can be improved.

\begin{question}
    Does there exist a function $f(k)$ depending only on $k$ such that for every outerplanar graph and for every $k \geq 2$, $\ch{k}(G) \leq f(k)$?
\end{question}

In a similar direction, it would be interesting to study the $k$-hyperopic cop number of planar graphs. In particular, is it possible to find a function $f(k)$ dependant only on $k$ such that for all planar graphs $G$, $\ch{k}(G) \leq f(k)$? Some results for 0-visibility cops and robbers use treewidth and pathwidth. It would be interesting to see if one could prove similar results for the $k$-hyperopic cop number. This would in particular be useful for studying outerplanar graphs as their treewidth is at most 2.

\section*{Acknowledgements}

Vesna Ir\v{s}i\v{c} Chenoweth acknowledges the financial support from the Slovenian Research Agency (Z1-50003, P1-0297, N1-0218, N1-0285, N1-0355) and from the European Union (ERC, KARST, 101071836).

\printbibliography

\end{document}